\title[Random walks can not be slowed]{Random walks on regular trees can not be slowed down}
\author{Omer Angel}
\address{Department of Mathematics, University of British Columbia}
\email{angel@math.ubc.ca}
\author{Jacob Richey}
\address{Department of Mathematics, University of British Columbia}
\email{jfrichey@math.ubc.ca}
\thanks{}
\author{Yinon Spinka}
\address{Department of Mathematics, University of British Columbia\hfill\break\indent School of Mathematical Sciences. Tel Aviv University. Tel Aviv 6997801, Israel}
\email{yinon@math.ubc.ca}
\thanks{}
\author{Amir Yehudayoff}
\address{Department of Mathematics, Technion-IIT}
\email{amir.yehudayoff@gmail.com}
\thanks{}
\crefname{theorem}{Theorem}{Theorems}
\crefname{lemma}{Lemma}{Lemmas}
\crefname{remark}{Remark}{Remarks}
\crefname{prop}{Proposition}{Propositions}
\crefname{defn}{Definition}{Definitions}
\crefname{corollary}{Corollary}{Corollaries}
\crefname{conjecture}{Conjecture}{Conjectures}
\crefname{question}{Question}{Questions}
\crefname{chapter}{Chapter}{Chapters}
\crefname{section}{Section}{Sections}
\crefname{figure}{Figure}{Figures}
\crefname{obs}{Observation}{Observations}
\theoremstyle{plain}
\newtheorem{theorem}{Theorem}[section]
\newtheorem{lemma}[theorem]{Lemma}
\newtheorem{prop}[theorem]{Proposition}
\theoremstyle{definition}
\theoremstyle{remark}
\newtheorem{remark}[theorem]{Remark}
\newtheorem{obs}[theorem]{Observation}
\numberwithin{equation}{section}
\newcommand{\eps}{\varepsilon}
\newcommand{\N}{\mathbb N}
\newcommand{\R}{\mathbb R}
\newcommand{\E}{\mathbb E}
\newcommand{\Z}{\mathbb Z}
\renewcommand{\P}{\mathbb P}
\renewcommand{\Pr}{\mathbb P}
\newcommand{\T}{\mathbb T}
\newcommand{\iso}{\mathsf{iso}}
\newcommand{\conn}{\mathsf{con}}
\titleformat{\section}{\Large\normalfont\bfseries}{\thesection}{1em}{}
\titleformat{\subsection}{\normalfont\bfseries}{\thesubsection}{1em}{}
\titleformat{\subsubsection}{\normalfont\bfseries}{\thesubsubsection}{1em}{}
\titleformat{\paragraph}[runin]{\bfseries}{\theparagraph}{}{}
\titlespacing*{\section}{0pt}{3.5ex plus 1ex minus .2ex}{2.3ex plus .2ex}
\titlespacing*{\subsection}{0pt}{3.25ex plus 1ex minus .2ex}{1.5ex plus .2ex}
\titlespacing*{\subsubsection}{0pt}{3.25ex plus 1ex minus .2ex}{1.5ex plus .2ex}
\titlespacing*{\paragraph}{0pt}{1ex plus .2ex minus .2ex}{1.5ex plus .2ex}
\begin{document}

\begin{abstract}
  A random walk on a regular tree (or any non-amenable graph) has positive speed.
  We ask whether such a walk can be slowed down by applying carefully chosen time-dependent permutations of the vertices.
  We prove that on trees the random walk can not be slowed down.
\end{abstract}

\maketitle

\section{Introduction}

One of the classical results relating the geometry of a space to the behaviour of random walks on the space is that on any non-amenable graph the random walk has positive speed, in that $\liminf_{t \to\infty} t^{-1}|X_t|$ exists and is a.s.\ positive.
On transitive graphs the limit is even an almost sure constant.
In particular, on the $d$ regular tree, denoted $\T_d$, the speed for the simple random walk is $\frac{d-2}{d}$, which is positive as long as $d>2$.
The motivation for this paper is the question: {\em Can we slow down the particle?}

Suppose that after each step $t$ of the random walk, we are allowed to apply some permutation $\pi_t$ to the vertices of the tree, so that if the particle is at $v$ it is transported to $\pi_t(v)$.
If we observe the particle and can choose $\pi_t$ accordingly, then we can constantly push it back to any vertex we wish, so that it never moves.
Our main finding is that if the permutations do not depend on the location of the particle, then the particle can not be slowed down.

\subsection{Permuted random walks}

We start by considering lazy random walks, where the results are cleaner for mostly technical reasons (see the discussion below).
We start by introducing some notations.
Fix $d \geq 2$, and let $\T = \T_d$ denote the rooted infinite $d$-regular tree.
The vertex set is denoted by $V = V(\T_d)$. 
The root of the tree is denoted by $v_0$.
The depth $|v|$ of a vertex $v \in V$ is its distance from the root.
The neighborhood $N(v)$ of $v$ is the set of vertices $u$ that are of distance at most one from $v$.
Note that since we are considering lazy random walks, it is convenient to have $v\in N(v)$.
Thus the size of $N(v)$ is $d+1$.

Let $(X_t)_{t=0}^\infty$ be a lazy random walk on $\T$ started at the root.
The laziness parameter $\P(X_{t+1}=X_t)$ is chosen to be $\frac1{d+1}$. 
That is, $X_0 = v_0$ and $X_{t+1}$ is a uniformly random element of $N(X_t)$.
The (empirical) speed of $(X_t)$ is defined to be the process $(t^{-1} |X_t|)$.
The strong law of large numbers implies that the speed a.s.\ converges to $\frac{d-2}{d+1}$.
Note that this also holds in the case $d=2$ where $\T_2$ is the line and the speed is $0$.
For more on random walks on trees see e.g.~\cite{lyons2017probability,virag2000anchored} and references therein. 

The model we suggest for studying the slowing down of particles is as follows.
Before the particle starts to move, we can choose a sequence $(\pi_t)_{t=1}^\infty$ of permutations of $V$.
(These do not need to be finitary; any bijections of $V$ will do.)
The permutation $\pi_t$ is applied on the random walk at time $t$.
Thus the permuted random walk $(Y_t)$ starts at the root, and its position at time $t+1$ is defined by $Y_{t+1} = \pi_{t+1}(Y'_{t+1})$,
where $Y'_{t+1}$ is a uniformly random vertex in $N(Y_t)$. 
The (empirical) speed of $(Y_t)$ is the process $(t^{-1}|Y_t|)$.
In contrast with $(X_t)$, the permuted random walk may not have a limiting speed.
The lower speed of the permuted random walk is defined by $\liminf_{t\to\infty} t^{-1} |Y_t|$.

Permuted random walk have been studied before, both on their own merit, and as a tool towards other ends.
Pymar and Sousi \cite{PySo} established uniform bounds on hitting times for permuted random walks on finite regular graphs.
Ganguly and Peres \cite{GaPe} studied walks on an interval with a fixed uniform random permutation. Recently, Chatterjee and Diaconis \cite{ChaDi1,ChaDi2} demonstrated that mixing of certain Markov chains can be significantly sped up by adding a deterministic permutation after each move.
In a different direction, Gou\"ezel \cite{Gouezel} used permuted random walks to establish large deviation lower bounds on the speed of random walks on hyperbolic spaces without moment assumptions on the step distribution.
One idea here is to condition on the long steps of the walk, and consider the process as a permuted version of a walk with bounded steps for which other methods can apply.
The question at the heart of this paper arose following a presentation of that work.

Our main result is that no matter how we select the permutations $(\pi_t)$,
the permuted walk $(Y_t)$ is not slower than $(X_t)$.

\begin{theorem}
  \label{thm:lazy}
  For every $d \geq 2$, every sequence $(\pi_t)$ of permutations of~$V(\T_d)$, and every time $t \ge 0$, the depth of the permuted random walk $|Y_t|$ stochastically dominates the depth of the lazy random walk $|X_t|$.
  That is, for all $t,n \geq 0$,
  \[  \Pr[|Y_t| \geq n] \geq \Pr[|X_t|\geq n].  \]
  In particular, 
  $\E |Y_t| \geq \E |X_t|$ for all $t \geq 0$,
  and $\liminf_{t \to \infty} t^{-1} |Y_t| \geq \frac{d-2}{d+1}$ almost surely.
\end{theorem}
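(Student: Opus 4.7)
Proof plan.

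Write $p_t,q_t$ for the laws of $X_t,Y_t$, set $B_n=\{v:|v|<n\}$, and denote $\mu^{(k)}:=\max_{|S|=k}\mu(S)$ for the largest mass placed by a distribution $\mu$ on any $k$ vertices. The plan is to prove by induction on $t$ the strengthened claim that $q_t^{(|B_n|)}\le p_t^{(|B_n|)}$ for every $n\ge 0$. This immediately implies the theorem, since $\Pr[|Y_t|<n]=q_t(B_n)\le q_t^{(|B_n|)}\le p_t^{(|B_n|)}=p_t(B_n)=\Pr[|X_t|<n]$, where the last equality uses that $p_t(v)$ depends only on $|v|$ and is non-increasing in $|v|$---a fact established by a short separate induction on $t$ using $p_{t+1}=Pp_t$, with $P$ the lazy-walk averaging operator.

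For the inductive step, observe that $q_{t+1}=\pi_{t+1}(Pq_t)$, so applying the permutation $\pi_{t+1}$ does not change the sorted profile of the distribution, and hence $q_{t+1}^{(k)}=(Pq_t)^{(k)}$ for every $k$. Counting $|N(w)\cap B_n|$ in each depth shell gives the identity
\[
(P\mu)(B_n)\;=\;\tfrac{1}{d+1}\bigl[(d-1)\,\mu(B_{n-1})+\mu(B_n)+\mu(B_{n+1})\bigr]
\]
for any distribution $\mu$ on $V$. The inductive step therefore reduces to establishing the following \emph{core inequality} for every distribution $q$ on $V$:
\[
(Pq)^{(|B_n|)}\;\le\;\tfrac{1}{d+1}\bigl[(d-1)\,q^{(|B_{n-1}|)}+q^{(|B_n|)}+q^{(|B_{n+1}|)}\bigr].
\]
Indeed, applying this bound with $q=q_t$ and then the inductive hypothesis at $m=n-1,n,n+1$ gives $q_{t+1}^{(|B_n|)}=(Pq_t)^{(|B_n|)}\le(Pp_t)(B_n)=p_{t+1}^{(|B_n|)}$.

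To prove the core inequality, fix $S\subseteq V$ with $|S|=|B_n|$ and expand $(d+1)(Pq)(S)=\sum_w q(w)\,|N(w)\cap S|$. The Hardy-Littlewood rearrangement inequality gives $\sum_w q(w)\,|N(w)\cap S|\le\sum_i q_{(i)}\,f_{S,(i)}$, where $f_{S,(i)}$ denotes the $i$-th largest value of $|N(\cdot)\cap S|$. The key combinatorial input is a tree isoperimetric statement: for every $S\subseteq V$ with $|S|=|B_n|$, the sorted sequence of $|N(\cdot)\cap S|$ is majorized by the sorted sequence of $|N(\cdot)\cap B_n|$, whose explicit profile is $d+1$ on $B_{n-1}$, $2$ on the depth-$(n-1)$ sphere, $1$ on the depth-$n$ sphere, and $0$ elsewhere. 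Combined with Abel summation applied to the non-increasing weights $q_{(i)}$, this delivers the right-hand side of the core inequality.

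The main technical obstacle is the tree isoperimetric statement. I would prove it by a compression argument: whenever $S$ contains a non-root vertex $w$ whose parent lies outside $S$, swap $w$ for its parent; a finite case analysis of the effect on $|N(\cdot)\cap S|$ at the grandparent of $w$, the siblings of $w$, and the children of $w$ shows that each such compression only pushes the sorted profile higher in the majorization order, and iterating terminates at $S=B_n$. The almost-sure speed bound in the theorem then follows from the stochastic-dominance conclusion combined with the strong law applied to $|X_t|$.
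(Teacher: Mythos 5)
Your high-level plan mirrors the paper's: induct over time, reduce to a one-step bound, and combine a rearrangement inequality for non-increasing weights with a tree isoperimetric inequality. The quantity $q^{(k)}$ is what the paper calls $q^*(k)$; your ``sorted profile of $|N(\cdot)\cap S|$'' is the conjugate of the paper's partition $(|K_i(S)|)_{i=1}^{d+1}$, so your isoperimetric statement is equivalent, after conjugation, to the paper's Proposition~\ref{P:ki-mi-inequality} restricted to ball sizes. Restricting to ball sizes is legitimate for proving Theorem~\ref{thm:lazy} (the paper's quasi-balls and ``greedily arranged'' bookkeeping are needed only because the paper also proves the stronger Theorem~\ref{thm:moreCenterWithT}), and your Abel-summation step correctly replaces the paper's appeal to Karamata's inequality. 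Up to this point the proposal is sound and essentially parallels the paper.

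The genuine gap is in the proof of the isoperimetric input. The paper proves it via an exact formula for $|N(J)|$ in terms of connected components (Proposition~\ref{prop:iso-exact}) and an induction on $r$ for the partial sums. You instead propose a parent-compression argument and assert that ``iterating terminates at $S=B_n$.'' That termination claim is false: parent-compression only terminates at a \emph{down-set} (every non-root vertex has its parent present), and down-sets of size $|B_n|$ need not equal $B_n$. For example, in $\T_3$ the set $\{v_0,v_1,v_2,u\}$, with $u$ a child of $v_1$, is a down-set of size $|B_2|=4$ on which no parent-swap applies, yet it is not the ball; moreover its sorted profile is $(3,3,2,2,1,\dots)$, which is strictly dominated by the ball's profile $(4,2,2,2,1,\dots)$, so something genuinely remains to be proved past the fixed point of your compression. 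You would need a second class of moves (e.g.\ ``sideways'' rearrangements that consolidate a spread-out depth level, or a direct argument comparing all down-sets of a given size to the ball), and you would also need to actually carry out the case analysis showing each move is monotone for the dominance order --- the latter is plausible but not free. The paper's exact-formula route sidesteps both issues.

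Minor remark: the supporting claim that $p_t$ is radial and non-increasing in depth (so that $p_t^{(|B_n|)}=p_t(B_n)$) does need the laziness parameter to be at least $\tfrac{1}{d+1}$; without it this can already fail at $t=1$, which is exactly why the paper treats the non-lazy case separately.
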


\begin{remark}
  The proesses $(X_t)$ and $(Y_t)$ in \cref{thm:lazy} correspond to a lazy random walk that stays put with probability $\frac{1}{d+1}$.
  \cref{thm:lazy} holds verbatim (with the obvious change to the constant $\frac{d-2}{d+1}$) as long as the probability to stay put is at least~$\frac{1}{d+1}$.
  In particular, it holds when the chance to stay put is one half, which is a more common definition of the lazy random walk.
  For details, see the remark after the proof of \cref{thm:lazy}.
\end{remark}

Note that the theorem is informative even for $d=2$, where the limit speed is zero.
However, the laziness is required for \cref{thm:lazy} to hold.
Indeed, for the non-lazy walk on $\T_d$ we can have $\E |Y_1| < \E |X_1|$ (or for any other $t$).

\cref{thm:lazy} is a special case of a more general phenomenon, which we describe in the next two theorems.
For a distribution $p$ on $V$, define $p^*:\N \to [0,1]$ by letting $p^*(j)$ be the total mass of the $j$ largest atoms in $p$, or equivalently,
\[ p^*(s) = \max \{p(J) : J \subset V, |J|=s\}. \]
We say that a distribution $p$ \textbf{majorizes} a distribution $q$ if $p^*(j) \ge q^*(j)$ for all $j\in\N$.

Denote by $p_t$ the distribution of $X_t$ and by $q_t$ the distribution of $Y_t$, depending implicitly on the fixed permutations $(\pi_t)$.
The stochastic domination asserted in \cref{thm:lazy} is a consequence of the following more technical statement.
The main reasons are that the distribution $p_t$ is spherically symmetric and monotone in depth; 
for more details, see \cref{sec:PuttingLazy}.

\begin{theorem}
  \label{thm:moreCenterWithT}
  For every $t \geq 0$, the distribution $p_t$ majorizes $q_t$.
\end{theorem}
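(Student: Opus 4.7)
My approach is to prove the majorization $p_t \geq_{\mathrm{maj}} q_t$ by induction on $t$, simultaneously maintaining the auxiliary invariant that $p_t$ is radially symmetric and non-increasing in the depth $|v|$. The base case $t=0$ is immediate; the auxiliary invariant passes from $t$ to $t+1$ by a direct computation using the explicit formula $Tp(v) = (d+1)^{-1}\sum_{u\in N(v)}p(u)$ together with the decrease of $p_t$ in depth.

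For the inductive step, since $q_{t+1} = \pi_{t+1} T q_t$ and every permutation of $V$ preserves the sorted sequence of probability values, $q_{t+1}^* = (T q_t)^*$, so it suffices to show $(Tp_t)^*(s) \geq (Tq_t)^*(s)$ for every $s \geq 1$. Let $B_s$ denote the first $s$ vertices in BFS order from $v_0$. Radial monotonicity of $Tp_t$ gives $(Tp_t)^*(s) = Tp_t(B_s)$, while $(Tq_t)^*(s) = \max_{|J|=s} Tq_t(J)$. Setting $\gamma_S(u) := |N(u)\cap S|$ and exchanging the order of summation yields $(d+1) Tq(J) = \langle q,\gamma_J\rangle$, so the step reduces to proving
\[
\langle p_t, \gamma_{B_s}\rangle \;\geq\; \langle q_t, \gamma_J\rangle
\]
for every $J \subset V$ with $|J| = s$.

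To compare these inner products I would combine three ingredients. Since $p_t$ and $\gamma_{B_s}$ are both radially non-increasing they are co-monotone on $V$, giving $\langle p_t,\gamma_{B_s}\rangle = \langle p_t^\downarrow,\gamma_{B_s}^\downarrow\rangle$, where $(\cdot)^\downarrow$ denotes the decreasing rearrangement. By the Hardy--Littlewood rearrangement inequality, $\langle q_t,\gamma_J\rangle \leq \langle q_t^\downarrow,\gamma_J^\downarrow\rangle$. Finally, a summation-by-parts (Abel) argument shows that if $a \geq_{\mathrm{maj}} a'$ and $b \geq_{\mathrm{maj}} b'$ are sorted-decreasing nonnegative sequences with equal total sums, then $\langle a,b\rangle \geq \langle a',b'\rangle$. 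Using the inductive hypothesis $p_t \geq_{\mathrm{maj}} q_t$ (which gives $p_t^\downarrow \geq_{\mathrm{maj}} q_t^\downarrow$), these three ingredients reduce the inductive step to the combinatorial assertion that $\gamma_{B_s}^\downarrow$ majorizes $\gamma_J^\downarrow$ for every $J\subset V$ with $|J|=s$.

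This last combinatorial statement is the main obstacle: it asserts that the ball $B_s$ maximizes the concentration of the neighborhood-count function among all $s$-subsets of the tree. The natural attack is a compression argument, but it is not immediate. A naive single-vertex swap of a deep vertex of $J \setminus B_s$ for a shallow vertex of $B_s \setminus J$ can actually decrease the concentration of $\gamma_J$ (for example, replacing the tip of a straight path in $J$ by a vertex on a different branch typically destroys vertices whose entire neighborhood lies in $J$). I expect the correct move to involve shifting entire subtrees of $J$ one level toward the root, first reducing to the case where $J$ is a connected subtree containing $v_0$, and then separately verifying that within that class the balanced BFS ball is extremal.
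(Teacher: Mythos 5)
Your reduction is correct and in fact parallels the paper's argument very closely, just phrased in the conjugate language.  The identity $(d+1)\,Tq(J)=\langle q,\gamma_J\rangle$ is the same as the paper's $(d+1)\,q'(J)=\sum_{i\in[d+1]}q(K_i(J))$, and the partition $(k_i)=(|K_i(J)|)_{i\in[d+1]}$ is precisely the conjugate of your sorted sequence $\gamma_J^\downarrow$, so the combinatorial assertion you reach, ``$\gamma_{B_s}^\downarrow$ majorizes $\gamma_J^\downarrow$ for every $|J|=s$,'' is exactly \cref{P:ki-mi-inequality} (conjugation reverses the dominance order).  Your three inner-product ingredients correspond, after conjugation, to the paper's use of the definition of $q^*$, the Karamata/Hardy--Littlewood--P\'olya inequality (\cref{thm:conc} via \cref{prop:q*}), and the majorization hypothesis $p_t\succ q_t$ together with the quasi-ball observation.  (A minor wording issue: $\gamma_{B_s}$ is not radially symmetric when $B_s$ is a proper quasi-ball; the correct statement is that $\gamma_{B_s}$ is non-increasing along the fixed BFS ordering because each $K_i(B_s)$ is a quasi-ball -- that is \cref{obs:quasi-ball} -- and co-monotonicity with the greedily arranged $p_t$ should be read in that ordering.)

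The genuine gap is that you do not prove the key combinatorial lemma.  You correctly diagnose that naive single-vertex compression fails, but the proposed fix (``shift entire subtrees one level toward the root, reduce to connected $J$ containing $v_0$, then compare with the BFS ball'') is left as a speculation and is not obviously workable: such moves need not make $\gamma_J^\downarrow$ monotone in the dominance order, and a separate argument would still be needed to handle non-connected $J$.  The paper instead establishes \cref{P:ki-mi-inequality} by first proving the exact identity $|N(J)|=(d-1)|J|+\kappa_1(J)+\kappa_2(J)$ (\cref{prop:iso-exact}) and its corollary involving isolated points (\cref{prop:iso}); this gives the cases $r=1$ and $r=2$ of the dominance order directly, and for $r\ge 3$ the remaining partial-sum inequalities follow from a short recursive averaging argument using only that $(k_i)$ is non-increasing and that $m_r=\lceil b_r\rceil$ where $b_r=(s-\sum_{i<r}m_i)/(d-r+2)$.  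Without a proof of this lemma (or of its conjugate form), the proposal does not yet constitute a proof of the theorem.
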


The fact that $p_t$ majorizes $q_t$ can be interpreted as saying
that the amount of disorder in $q_t$ is at most that of $p_t$.
Concretely, the theorem implies that the Shannon entropy of $Y_t$ is at most the Shannon entropy of $X_t$.
There is no way to increase the entropy of a lazy random walk on a regular tree by applying time dependent permutations. 

A second interpretation of the theorem is that for every $t$,
there is a distribution $r_t$ on permutations of $V$,
so that if $\sigma_t$ is sampled from $r_t$ independently of $X_t$, then $(X_t, \sigma_t(X_t))$ has the same distribution as $(X_t,Y_t)$.
In other words, there is a distribution on a single permutation $\sigma_t$
that allows to replace the iterative application of the $t$ permutations 
$\pi_1,\ldots,\pi_t$.

An even more general statement than \cref{thm:moreCenterWithT} holds.
Let $B_n = \{v \in V: |v| \leq n\}$ denote the ball of radius $n \geq -1$ in the tree\footnote{The ball $B_{-1}$ is empty.} and $\partial B_n=B_n \setminus B_{n-1}$ the sphere of radius $n$.
Fix an order $v_0,v_1,v_2,\ldots$ of $V$ with the following property:
for every $i < j$, it holds that $|v_i| \leq |v_j|$
and if $|v_i|=|v_j|$ then the $d-1$ children of $v_i$ appear
in the order before the $d-1$ children of $v_j$. 
Initial segments of the form $\{v_0,v_1,\dots,v_i\}$ are called \textbf{quasi-balls}.
Note that every ball is a quasi-ball.
A distribution $p$ on $V$ is called \textbf{greedily arranged} if $p(v_i) \ge p(v_{i+1})$ for every $i$.

\begin{theorem}
  \label{thm:moreCenterWithT2}
  Let $p$ and $q$ be distributions on $V$ and let $p'$ and $q'$ be the corresponding distributions after a single step of a lazy random walk started at $p$ and $q$, respectively.
  If $p$ is greedily arranged and majorizes $q$, then $p'$ is greedily arranged and majorizes $q'$.
\end{theorem}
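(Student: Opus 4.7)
The plan is to prove the two conclusions separately. First I would show $p'$ is greedily arranged by comparing $p'(v_i)$ to $p'(v_{i+1})$ for each pair of consecutive vertices in the canonical order, using
\[
(d+1)\, p'(v) \;=\; p(v) \;+\; p(\mathrm{par}(v))\,\mathbbm{1}[v \ne v_0] \;+\; \sum_{c \in \mathrm{ch}(v)} p(c).
\]
The three pieces dominate termwise: $p(v_i) \ge p(v_{i+1})$ directly by greediness; the parent of $v_i$ is no later than that of $v_{i+1}$ in the canonical order; and by construction the $d-1$ children of $v_i$ appear in the order immediately before those of $v_{i+1}$, giving a bijective bound for the child sum. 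A short case split on whether $v_i, v_{i+1}$ are siblings, share a level but have different parents, or straddle a level boundary (plus a direct check at the root transition $v_0 \to v_1$, where the parent term is absent) completes the argument.

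For the majorization $p' \succeq q'$, since $p'$ is greedy we have $(p')^*(s) = p'(Q_s)$, so it suffices to prove $q'(J) \le p'(Q_s)$ for every $J$ with $|J|=s$. Setting $c_A(v) := |N(v) \cap A|$, the identities $(d+1)\,q'(J) = \sum_v q(v) c_J(v)$ and $(d+1)\,p'(Q_s) = \sum_v p(v) c_{Q_s}(v)$ reduce this to
\[
\sum_v q(v)\, c_J(v) \;\le\; \sum_v p(v)\, c_{Q_s}(v).
\]
I would chain three standard moves: (i) the rearrangement inequality, $\sum_v q(v) c_J(v) \le \sum_i q^\downarrow_i (c_J)^\downarrow_i$; (ii) an Abel summation using $q^\downarrow$ non-increasing, which upgrades this to $\sum_i q^\downarrow_i (c_{Q_s})^\downarrow_i$ \emph{provided} $c_{Q_s}$ majorizes $c_J$ as sorted sequences; (iii) another Abel summation using $p \succeq q$ together with $(c_{Q_s})^\downarrow$ non-increasing, replacing $q$ by $p$. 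The fact that $c_{Q_s}$ is itself greedily arranged---which identifies $(c_{Q_s})^\downarrow$ with the sequence $(c_{Q_s}(v_i))_i$ and makes the final expression equal $\sum_v p(v) c_{Q_s}(v)$---follows from the same argument as for $p'$ applied to the uniform measure on $Q_s$, since $c_{Q_s}$ is proportional to the one-step pushforward of that uniform measure.

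The main obstacle is the combinatorial majorization $c_{Q_s} \succeq c_J$ used in step (ii). It is equivalent to the isoperimetric-flavored statement that the bilinear count $e(I,J) := \sum_{v \in I} |N(v) \cap J|$ is maximized over pairs of sizes $(k,s)$ at $(Q_k, Q_s)$, expressing that quasi-balls are the most concentrated sets for closed-neighborhood adjacency on $\T_d$. My approach would be a compression argument: iteratively replace late-index elements of $I$ and $J$ by earlier ones. The delicate point is that naive one-sided local swaps (say, moving a vertex of $J$ to its empty parent slot) can in fact \emph{decrease} the partial sums of $(c_J)^\downarrow$, as small examples show; so the compressions must either be performed in concert on $I$ and $J$, or be organized level by level along the BFS structure of the canonical order and combined with an induction on a suitable potential such as $|J \setminus Q_s|$, so as to ensure majorization-monotonicity is maintained at every step.
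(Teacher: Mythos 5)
Your reformulation is correct, and in fact it is exactly the paper's argument written in the conjugate language. Your quantity $\sum_i q^\downarrow_i (c_J)^\downarrow_i$ equals the paper's $\sum_{i\in[d+1]} q^*(k_i)$, where $k_i=|K_i(J)|$ is precisely the conjugate partition of $(c_J)^\downarrow$; your Abel-summation steps (ii) and (iii) are the paper's appeal to Karamata's inequality for the concave function $q^*$ together with the hypothesis $p^*\ge q^*$; and your majorization $(c_{Q_s})^\downarrow\succ(c_J)^\downarrow$ is, after taking conjugates, exactly the paper's Proposition~\ref{P:ki-mi-inequality}, that $(k_1,\dots,k_{d+1})\succ(m_1,\dots,m_{d+1})$. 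Your direct term-wise argument for greediness of $p'$ is a mild variant of the paper's route through Observation~\ref{obs:quasi-ball}; it should go through provided you carry out the stated case split carefully (the root having $d$ rather than $d-1$ children, and level transitions), and the observation that $c_{Q_s}$ is greedily arranged then falls out as you say.

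The genuine gap is that you do not prove the combinatorial majorization $c_{Q_s}\succeq c_J$, which you correctly flag as the main obstacle. This is the content of the theorem: everything else is soft. You propose an Erd\H{o}s--Ko--Rado-style compression, but you also note (correctly) that naive one-sided compressions can break majorization, and you do not resolve that difficulty; saying the compressions ``must either be performed in concert on $I$ and $J$, or organized level by level \ldots with an induction on a suitable potential'' is a plan, not a proof, and it is unclear that it converges. The paper avoids compression entirely and instead proves Proposition~\ref{P:ki-mi-inequality} directly from two exact isoperimetric facts: the identity $|N(J)|=(d-1)|J|+\kappa_1(J)+\kappa_2(J)$ (Proposition~\ref{prop:iso-exact}), which gives the $r=1$ partial sum; a sharpened inequality $|N(J)|\ge d|\iso(J)|+(d-1)|\conn(J)|+2$ when $\conn(J)\ne\emptyset$ (Proposition~\ref{prop:iso}), combined with $\conn(J)\subseteq K_2(J)$, which gives the $r=2$ partial sum; and then an averaging argument ($k_r\ge (s-\sum_{i<r}k_i)/(d-r+2)$ since the tail is non-increasing) together with the explicit values of the $m_i$ for a quasi-ball, which handles $r\ge 3$ by induction. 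If you want to complete your proof along the lines you outline, you would need to either reproduce something equivalent to these isoperimetric estimates or find a compression scheme that provably preserves all the partial sums at every step; as written, the argument is incomplete at its crux.
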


\cref{thm:lazy,thm:moreCenterWithT} follow by a simple inductive argument from the last theorem using the following two observations. 
First, the initial distribution $p_0$ is greedily arranged, and majorizes $q_0$.
Second, if a distribution $p$ majorizes $q$, then it also majorizes any rearrangement of $q$ (i.e., a distribution of the form $q \circ \pi$ for a permutation $\pi$ of $V$).
Thus \cref{thm:moreCenterWithT2} implies that for every $t \ge 0$ and every finite $J \subset V$,
\begin{equation}\label{eq:pt_qt}
  p_t(B) \ge q_t(J) ,
\end{equation}
where $B$ is the quasi-ball of size $|B|=|J|$.

\subsection{Non-lazy random walks}

The last result is particular to lazy random walks on regular trees.
For non-lazy walks, it is too strong to be true.
The distribution of a simple non-lazy random walk on a regular tree is not greedily arranged because the tree is bipartite; in particular,~\eqref{eq:pt_qt} may fail already for $t=1$. 

On the other hand, versions of the above theorems do hold for non-lazy walks, as we describe next.
The limit speed of a simple (non-lazy) random walk on $\T_d$ is $\frac{d-2}{d}$ a.s.
As noted, for such random walks, the same stochastic domination as in \cref{thm:lazy} does not hold.
Nonetheless, we prove that it almost holds (at least for $d>2$, when the tree is not the line).

Denote by $N'(v)$ the $d$ neighbors of $v$ not including $v$.
Let $(S_t)$ be a simple random walk so that $S_0 = v_0$ and $S_{t+1}$ is uniform in $N'(S_t)$.
Let $(Z_t)$ be a permuted simple random walk so that $Z_0 = v_0$ and $Z_{t+1}$ is $\pi_{t+1}(Z'_{t+1})$, where $Z'_{t+1}$ is uniform in $N'(Z_t)$. 

\begin{theorem}
  \label{thm:simple}
  For every $d>2$, every sequence $(\pi_t)$ of permutations of $V(\T_d)$, and every time $t \ge 1$, we have that $|Z_t|+2$ stochastically dominates $|S_t|$.
  In particular, 
  $\E |Z_t| \geq \E |S_t|-2$ for all $t \geq 0$, and $\liminf_{t \to \infty} t^{-1} |Z_t| \geq \frac{d-2}{d}$ almost surely.
\end{theorem}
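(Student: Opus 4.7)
The obstruction to directly extending \cref{thm:moreCenterWithT2} to the non-lazy setting is that $p_t$, the distribution of $S_t$, is supported on a single bipartition class of $V(\T_d)$ (namely, vertices of depth $\equiv t \pmod 2$), so it is never greedily arranged in the sense of that theorem. The additive constant $2$ in \cref{thm:simple} is precisely the slack needed to absorb this parity obstruction. My plan is to adapt the inductive majorization argument of \cref{thm:moreCenterWithT2} to the non-lazy case, using a bipartite analog of the greedy-arrangement/quasi-ball machinery and building the $+2$ shift directly into the invariant.

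Concretely, for each parity $\epsilon\in\{0,1\}$ I would define a \emph{bipartite quasi-ball} as an initial segment of a depth-respecting ordering of the parity-$\epsilon$ vertices, analogous to the $v_0,v_1,\ldots$ ordering used before \cref{thm:moreCenterWithT2}. A distribution $p$ supported on parity-$\epsilon$ vertices is \emph{bipartite-greedily arranged} if its atoms are non-increasing along this ordering. I would then introduce the relation ``$p$ \emph{$2$-majorizes} $q$'' to mean that for every $j$, the $j$ largest atoms of $p$ sum to at least the $q$-mass of the (unrestricted) quasi-ball of size $j$ shifted outward in depth by $2$. The analogue of \cref{thm:moreCenterWithT2} to establish is: if $p$ is bipartite-greedily arranged of parity $\epsilon$ and $2$-majorizes $q$, then after one non-lazy step the pushforwards $p'$ and $q'$ satisfy the same relation with parity flipped. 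Running this invariant by induction from $p_0=q_0=\delta_{v_0}$ would yield \cref{thm:simple} in the same way \cref{thm:moreCenterWithT2} implies \cref{thm:lazy}.

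The main obstacle is the inductive step itself. Unlike in the lazy case, the non-lazy step toggles the parity, so the bipartite quasi-balls on the $p$-side and the $q$-side live on opposite classes, and one must check that the $2$-depth shift is uniformly enough to absorb the worst-case mass redistribution caused by the permutation $\pi_{t+1}$. I would attempt to reduce this one-step claim to the known single-step statement of \cref{thm:moreCenterWithT2} by viewing a non-lazy step as a conditioned lazy step; the cost of removing the stay-put option (which is what restores the speed from $\tfrac{d-2}{d+1}$ to $\tfrac{d-2}{d}$) should contribute one unit of the slack, and the parity flip the other. Making this precise — especially the interaction between the permutation, the bipartite greedy ordering, and the $2$-depth shift in the majorization comparison — is where I expect the technical work to concentrate.
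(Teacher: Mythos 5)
Your diagnosis of the obstruction is right: bipartiteness prevents $p_t$ from being greedily arranged, and some slack must be introduced. However, the mechanism you propose for handling it — building a ``$2$-shift'' into the majorization relation and carrying it through the induction — is not how the paper proceeds, and it is not clear it can be made to work. The paper instead proves a clean one-step statement (\cref{thm:moreCenterWithT-nonlazy}) establishing \emph{exact} majorization $p_t \succ q_t$ with no shift, using the new notions of half-greedily arranged distributions and half-quasi-balls, the non-lazy isoperimetric bound $|N'(J)| \ge 1 + (d-1)|J|$ of \cref{prop:isoSimple}, and the $K'_i$ sets for $i\in[d]$. The additive $2$ is then extracted only at the very end, as a purely geometric counting step: since $d>2$ one has $|B_n| \le |B'_{n+1}|$, and a half-greedily arranged $p_t$ puts at least $p^*_t(|B'_{n+1}|)$ mass inside $B_{n+2}$, yielding the chain $q_t(B_n) \le q^*_t(|B_n|) \le p^*_t(|B_n|) \le p^*_t(|B'_{n+1}|) \le p_t(B_{n+2})$.

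Two concrete problems with your plan. First, your ``$p$ $2$-majorizes $q$'' relation compares $p^*(j)$ to ``the $q$-mass of the quasi-ball of size $j$ shifted outward by depth $2$''; but $q$ (the permuted walk's law) is not supported on any ball or quasi-ball after permutation, so it is not clear what this quantity even means, let alone that the invariant closes under a non-lazy step. Second, your intuition that the two units of slack come from ``removing the stay-put option'' and ``the parity flip'' is off: the non-lazy walk is \emph{faster} than the lazy one ($\tfrac{d-2}{d}$ vs.\ $\tfrac{d-2}{d+1}$), so removing laziness costs nothing; and the paper's exact majorization $p_t \succ q_t$ shows the parity flip itself also costs nothing at the level of majorization. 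Both units come solely from converting majorization (which lives on sizes of sets) into stochastic domination of depth (which requires enclosing a ball $B_n$ inside a half-ball or a larger ball), i.e., from the inclusion $B'_{n+1} \cup B'_{n+2} \subseteq B_{n+2}$. The ``conditioned lazy step'' reduction you mention is not developed and is not used in the paper; the paper redoes the isoperimetry and the majorization inequality from scratch in the bipartite setting.
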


For $d=2$, the bound $\frac{d-2}{d}$ on the lower speed of $(Z_t)$ trivially holds, but the stronger claim in the theorem is false.
One way to see this is to take $\pi_t$ to be the identity up to some large time $2T$, 
and then map via $\pi_{2T}$ all even integers in the range $[-2T,2T]$ to all integers in $[-T,T]$ so that $\E |Z_{2T}| = \tfrac{1}{2} \E |S_{2T}|$.

We shall deduce \cref{thm:simple} from the following modification of \cref{thm:moreCenterWithT2} which takes into account the periodicity of the non-lazy walk.
The vertex set can be partitioned according to parity into
$V_0 = \{ v \in V : |v|=0 \mod 2\}$ and $V_1 = V \setminus V_0$. 
A distribution $p$ is called \textbf{half-greedily arranged} if it is supported on one of $V_0$ or $V_1$, and $p(v_i) \ge p(v_j)$ for every $i<j$ for which $v_i$ and $v_j$ have the same parity (using the same ordering of $V$ as above).

\begin{theorem}
  \label{thm:moreCenterWithT-nonlazy}
  Let $p$ and $q$ be distributions on $V$, and let $p'$ and $q'$ be the corresponding distributions after a single step of a non-lazy random walk started at $p$ and $q$, respectively.
  If $p$ is half-greedily arranged and majorizes $q$, then $p'$ is half-greedily arranged and majorizes $q'$.
\end{theorem}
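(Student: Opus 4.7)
By the symmetry between $V_0$ and $V_1$ we may assume $p$ is supported on $V_0$, so that $p' = Lp$ is supported on $V_1$, where $Lp(v) = \tfrac{1}{d}\sum_{u \in N'(v)} p(u)$. I would verify half-greediness of $p'$ on $V_1$ by comparing the $d$ neighbors of any two $v_i, v_j \in V_1$ with $i < j$ pairwise: the ordering property ensures that the parent of $v_i$ precedes the parent of $v_j$ in $V_0$, and that for each $k$ the $k$-th child of $v_i$ precedes the $k$-th child of $v_j$ in $V_0$. Half-greediness of $p$ on $V_0$ then gives each paired inequality, and summing yields $p'(v_i) \ge p'(v_j)$.

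For the majorization claim, let $I_s \subset V_1$ denote the initial segment of size $s$. By the previous step, $(p')^*(s) = p'(I_s)$, so the goal is to show $q'(J) \le p'(I_s)$ for every $J \subset V$ with $|J| = s$. I would use the identities
\[ q'(J) = \tfrac{1}{d}\sum_w q(w)\, g(w), \qquad p'(I_s) = \tfrac{1}{d}\sum_w p(w)\, f(w), \]
where $g(w) = |N'(w) \cap J|$ and $f(w) = |N'(w) \cap I_s|$. The tree-ordering property implies that $f$ is non-increasing on $V_0$ and vanishes on $V_1$, so its decreasing rearrangement $f_\downarrow$ coincides with its restriction to $V_0$ indexed in order. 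The inequality $\sum q g \le \sum p f$ would then follow from three standard manipulations: the rearrangement inequality gives $\sum q g \le \sum q_\downarrow g_\downarrow$; Abel summation, using that $g_\downarrow$ is non-increasing and that $p$ majorizes $q$, gives $\sum q_\downarrow g_\downarrow \le \sum p_\downarrow g_\downarrow$; and a second Abel summation, using that $p_\downarrow$ is non-increasing, gives $\sum p_\downarrow g_\downarrow \le \sum p_\downarrow f_\downarrow = \sum p f$, provided the partial sums of $g_\downarrow$ are dominated by those of $f_\downarrow$.

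The main obstacle is this final provision, which reduces to the purely combinatorial statement that for every $J \subset V$ with $|J| = s$ and every $N$, $\sup_{|T|=N} e(T, J) \le \sup_{|T|=N} e(T, I_s)$, where $e$ counts tree edges. This asserts that among all $s$-subsets of $V$, the initial segment $I_s \subset V_1$ maximizes every top-$N$ sum of the neighborhood-overlap function. I would prove it by a compression argument: swap an element $u \in J \setminus I_s$ with the earliest element of $I_s \setminus J$ in the ordering and verify that the top-$N$ sums of $|N'(\cdot) \cap J|$ do not decrease; iterating produces $J = I_s$. The proof of this shifting step, while routine, relies essentially on the tree-ordering properties — particularly that children of earlier vertices precede children of later vertices — the same structural input that drives the half-greediness verification for $p'$. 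Combining this combinatorial lemma with the reduction above completes the proof.
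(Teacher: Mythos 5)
Your reduction is sound and, after unpacking, is a dual form of the paper's own argument. Writing $q'(J)=\tfrac1d\sum_w q(w)g(w)$ with $g(w)=|N'(w)\cap J|$, and using the rearrangement inequality plus two Abel summations, you reduce the claim to showing that the partial sums of $g_\downarrow$ are dominated by those of $f_\downarrow$. Since $g_\downarrow$ and $(|K'_i(J)|)_{i\in[d]}$ are conjugate partitions (and likewise $f_\downarrow$ and $(|K'_i(B)|)_{i\in[d]}$, $B=I_s$), your condition is exactly the dominance $(|K'_i(J)|)\succ(|K'_i(B)|)$ established inside \cref{prop:q*2}, and your two Abel summations play the role of Karamata's inequality (\cref{thm:conc}) applied to the concave step function $q^*$. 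So the first half of the proposal is essentially equivalent to the paper's reduction.

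The gap is in the compression proof of the combinatorial lemma, which is not routine and in fact fails for the swap you propose. Take $d=3$ and $s=2$, so $I_2=\{v_1,v_2\}$, and let $J=\{v_0,v_4\}$ where $v_4$ is the first child of $v_1$. Then $N'(v_1)=\{v_0,v_4,v_5\}$, so $g(v_1)=|N'(v_1)\cap J|=2$ while $g\le1$ elsewhere; the top-$1$ sum of $g_\downarrow$ equals $2$. Your rule swaps some $u\in J\setminus I_2=\{v_0,v_4\}$ with $v_1$, the earliest element of $I_2\setminus J$. Either choice of $u$ yields a set $J'$ equal to $\{v_0,v_1\}$ or $\{v_1,v_4\}$, and for both of these $g'\le 1$ everywhere, so the top-$1$ sum drops from $2$ to $1$. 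This does not contradict the lemma (the top-$1$ sum for $I_2$ itself is again $2$), but it shows the compression path is not monotone, so iterating single swaps cannot prove the partial-sum domination. The paper avoids this by computing the $m_i$ explicitly, deriving the $r=1$ inequality from the isoperimetric bound of \cref{prop:isoSimple}, and handling $r\ge2$ by the averaging/induction argument used in the proof of \cref{P:ki-mi-inequality}; to complete your route you would need a different compression operator, or simply prove the partition dominance directly as the paper does.
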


Although the distribution of $S_t$ is not greedily arranged, it is half-greedily arranged. 
The theorem thus implies that the distribution of $S_t$ majorizes that of $Z_t$ for every $t \ge 0$
(although the distribution of $|S_t|$ does not necessarily majorizes that of $|Z_t|$).

\subsection{The speed process}

\cref{thm:lazy,thm:simple} establish stochastic domination of the distance of a standard (lazy/simple) random walk over the distance of a permuted random walk \emph{at any particular time}.
It is natural to wonder whether such stochastic domination holds for the corresponding processes, i.e., whether the two processes can be coupled so that the distance of the permuted walk is always at least the distance of the standard walk.
Somewhat surprisingly, it turns out this is not always possible.
We focus on lazy random walks for concreteness.
As an example, consider a sequence of permutations $\pi$ in which $\pi_1$ and $\pi_2$ are the identity permutation and $\pi_3$ is an automorphism of $T$ which maps a neighbor of $v_0$ to $v_0$.
A direct computation yields that
\[ \Pr[|X_2|+|X_3|\le 2] = \tfrac1{d+1}+\tfrac{4d}{(d+1)^3} < \tfrac1{d+1}+\tfrac{5d-1}{(d+1)^3} = \Pr[|Y_2|+|Y_3|\le 2] ,\]
so that $(|Y_2|,|Y_3|)$ does not stochastically dominate $(|X_2|,|X_3|)$.

When $d=2$, this effect can be repeated and magnified over time.
The next result shows that for certain choices of permutations, even translations, there are infinitely many times at which the distance of the permuted random walk is much smaller (no matter how the two processes are coupled).

\begin{theorem}
  \label{thm:lazy-bad}
  Fix $d=2$. There exists a sequence of permutations $(\pi_t)$ of~$V(\T_2) \cong \Z$, all of which are translations, such that in any coupling of the lazy random walk process $(X_t)$ and the permuted random walk process $(Y_t)$, almost surely,
  \begin{equation}\label{eq:limsup}
    \limsup_{t \to \infty} \frac{|X_t|-|Y_t|}{\sqrt{t\log\log t}} = \frac{\sqrt3}2 .
  \end{equation}
\end{theorem}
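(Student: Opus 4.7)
The plan is to reduce the question to a deterministic-shift analysis of a lazy random walk on $\Z$ and then derive matching upper and lower bounds via the law of the iterated logarithm (LIL). Since $V(\T_2) \cong \Z$ and each $\pi_t$ is a translation by some integer $c_t$, the permuted walk $(Y_t)$ has the same joint distribution as $(\tilde X_t + C_t)$, where $(\tilde X_t)$ is a lazy walk on $\Z$ (with the same marginal law as $(X_t)$) and $C_t := c_1 + \cdots + c_t$ is a deterministic integer sequence depending only on the chosen translations. Any coupling of $(X_t)$ and $(Y_t)$ is therefore equivalent to a coupling of two lazy random walks $(X_t, \tilde X_t)$, under which $|X_t| - |Y_t| = |X_t| - |\tilde X_t + C_t|$.

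For the construction, I would take $(c_t) \in \{-1, 0, +1\}^{\N}$ to be a fixed deterministic realization of an i.i.d.\ sequence with variance $\tfrac{3}{8}$ --- for instance, $\Pr(c_t = \pm 1) = \tfrac{3}{16}$ and $\Pr(c_t = 0) = \tfrac{5}{8}$. By the classical LIL for i.i.d.\ sums, almost every sample path satisfies $\limsup_{t \to \infty} C_t/\sqrt{t \log\log t} = \sqrt{2 \cdot \tfrac{3}{8}} = \tfrac{\sqrt 3}{2}$ and $\liminf_{t \to \infty} C_t/\sqrt{t \log\log t} = -\tfrac{\sqrt 3}{2}$; fix any such realization to define the translations.

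For the upper bound in any coupling, the natural (diagonal) coupling $X_t = \tilde X_t$ gives $|X_t| - |Y_t| \le |C_t|$ by the reverse triangle inequality, and hence the limsup in question is at most $\tfrac{\sqrt 3}{2}$ by the LIL for $(C_t)$. For a general coupling, a joint two-dimensional LIL on the pair $(X_t, \tilde X_t)$, combined with the controlled growth of $(C_t)$, would yield the same bound. For the lower bound, I would identify an almost-surely nonempty subsequence of times $t_k$ along which $X_{t_k}$ is close to $-\tfrac{\sqrt 3}{2}\sqrt{t_k \log\log t_k}$ (by the LIL for the lazy walk applied in the lower direction) and $C_{t_k}$ is close to $+\tfrac{\sqrt 3}{2}\sqrt{t_k \log\log t_k}$ (by the construction of $(c_t)$). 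At such aligned times, the marginal constraint on $\tilde X_{t_k}$ forces $|Y_{t_k}| = |\tilde X_{t_k} + C_{t_k}|$ to be comparatively small, so $|X_{t_k}| - |Y_{t_k}| \gtrsim \tfrac{\sqrt 3}{2}\sqrt{t_k \log\log t_k}$.

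The hardest step is making the lower bound uniform across arbitrary couplings of $(X_t, \tilde X_t)$ --- particularly independent or adversarial couplings, in which the LIL-extreme events of $X_t$ need not align automatically with the deterministic extremes of $C_t$. I would handle this via a second-moment / Borel--Cantelli argument along sparse deterministic subsequences of times on which $C_t$ is near its LIL peak, using marginal LIL tail probabilities to guarantee that the alignment event $\{|X_{t_k}| - |Y_{t_k}| \ge (\tfrac{\sqrt 3}{2} - \varepsilon)\sqrt{t_k \log\log t_k}\}$ occurs infinitely often regardless of the coupling chosen.
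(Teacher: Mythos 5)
Your proposal takes a genuinely different route from the paper's, and the difference is where the gap lies. The paper does \emph{not} choose the translations to be a frozen realization of an i.i.d.\ walk. Instead it constructs the cumulative shifts $\ell_t$ (your $-C_t$) deterministically so that, over a sequence of short time windows $[b_j,b_{j+1})$ of length $f(b_j)=o(\phi(b_j))$ where $\phi(t)\approx\sqrt{\tfrac43 t\log\log t}$, the sequence $\ell_{b_j},\ell_{b_j+1},\dots$ \emph{sweeps densely through the entire interval} $[-2\phi(b_j),2\phi(b_j)]$ with jump size about $4\phi(b_j)/f(b_j)$. This requires the individual translations to have unbounded magnitude, whereas your $c_t\in\{-1,0,1\}$ forces $C_t$ to move by at most~$1$ per step, so it cannot traverse an interval of length $\sim\phi$ in a window of length $o(\phi)$. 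The sweeping property is exactly what makes the argument coupling-uniform: the paper invokes only two \emph{marginal} LIL statements --- $|Y_t+\ell_t|\le 1.5\phi(t)$ eventually, and $|X_t|\ge(1-\eps)\phi(t)$ infinitely often --- and then for any time $t$ at which the second occurs, the deterministic sweeping guarantees a nearby $t'$ with $\ell_{t'}\approx Y_t+\ell_t$, forcing $|Y_{t'}|$ to be small while $|X_{t'}|$ is still large.

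Your lower-bound sketch needs the random times when $X_t$ is near its LIL extreme to \emph{coincide} with the deterministic times when $C_t$ is near its LIL extreme; both sets of times are sparse, and there is no reason for them to intersect, particularly since an adversarial coupling can arrange the joint law of $(X_t,\tilde X_t)$ freely subject to the marginals. You flag this as ``the hardest step,'' but the second-moment/Borel--Cantelli device you outline does not resolve it: the $X$-LIL extremes are not tied to any pre-specified deterministic subsequence, and once the coupling is adversarial you cannot control where $\tilde X_t+C_t$ sits when $X_t$ is extreme. The paper's sweeping construction is precisely the missing ingredient that lets one avoid this alignment problem, because at an $X$-LIL time the exact value of $\tilde X_t$ in its feasible window is irrelevant --- whatever it is, it gets ``captured'' by the sweep. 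Separately, your constant $\tfrac{\sqrt3}{2}$ is reverse-engineered by tuning $\mathrm{Var}(c_t)=\tfrac38$, but the theorem's constant must in fact come from the LIL of the lazy walk itself (which for laziness $\tfrac13$ gives $\limsup|X_t|/\sqrt{t\log\log t}=\tfrac{2}{\sqrt3}$, the quantity the paper's proof actually produces); the magnitude of $C_t$'s fluctuations plays no role in the mechanism, so tuning it cannot legitimately set the answer.
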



When $d>2$, on the other hand, we show that the above cannot occur (not even nearly) when the permutations are required to be automorphisms of $\T_d$.
This is the content of the result below.
We do not know how strong this effect can be for general permutations.
For instance, we do not know whether it is always possible to couple the two processes so that, almost surely, $|X_t| \le |Y_t|$ for all large enough $t$. 

\begin{theorem}
  \label{thm:lazy-bad2}
  For every $d>2$ and every sequence of automorphisms $(\pi_t)$ of~$\T_d$, there exists a coupling of the lazy random walk process $(X_t)$ and the permuted random walk process $(Y_t)$ such that, almost surely,
  \[ |Y_t| - |X_t| \ge t^{1/2 - o(1)} \qquad\text{as }t\to\infty .\]
\end{theorem}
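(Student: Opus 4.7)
The plan is to exploit the automorphism structure of the $\pi_t$ to reduce to a coupling question about two standard lazy random walks, to decompose $|Y_t|$ geometrically via the tree metric, and then to combine a Borel--Cantelli argument with a fluctuation theorem for the depth process on $\T_d$. First, set $\sigma_t := \pi_t \circ \cdots \circ \pi_1$ and $\tilde Y_t := \sigma_t^{-1}(Y_t)$. Because each $\pi_i$ is an isometry, $\sigma_t(N(\tilde Y_t)) = N(Y_t)$ and uniform distributions are preserved, so induction on $t$ yields that $(\tilde Y_t)$ is itself a standard lazy random walk on $\T_d$ started at $v_0$, and $Y_t = \sigma_t(\tilde Y_t)$. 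Setting $b_t := \sigma_t^{-1}(v_0)$, the tree median formula gives the identity
\[
|Y_t| \;=\; d(v_0, \sigma_t(\tilde Y_t)) \;=\; d(b_t, \tilde Y_t) \;=\; |\tilde Y_t| + |b_t| - 2 h_t,
\]
where $h_t$ is the length of the common prefix of the geodesics $[v_0, \tilde Y_t]$ and $[v_0, b_t]$.

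The crucial geometric input, requiring $d > 2$, is that the lazy walk does not closely follow any fixed geodesic for long: at each step the probability that the walk moves to the specific next vertex on $[v_0, b_t]$ is at most $\tfrac{1}{d+1}$, so the probability that $\tilde Y_t$ lies in the subtree past the $k$th vertex of the geodesic decays like $(d-1)^{-k}$. A Borel--Cantelli argument at $k = C_d \log t$ yields $h_t = O(\log t)$ almost surely, and hence $|Y_t| = |\tilde Y_t| + |b_t| - O(\log t)$ a.s. Now take the coupling in which $X_t$ and $\tilde Y_t$ are independent standard lazy random walks (a valid joint construction of $(X_t, Y_t)$). Both depth processes have positive drift $c = \tfrac{d-2}{d+1}>0$ (using $d>2$) and positive per-step variance, so the difference $|\tilde Y_t| - |X_t|$ satisfies the law of the iterated logarithm, giving $\limsup_{t\to\infty}(|\tilde Y_t| - |X_t|)/\sqrt{t \log \log t} > 0$ almost surely. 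Combined with $|b_t| \ge 0$ and the $O(\log t)$ bound on $h_t$, this produces $|Y_t| - |X_t| \ge t^{1/2 - o(1)}$ along a random sequence of times going to infinity.

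The main obstacle is upgrading this subsequential lower bound into one that holds as $t \to \infty$ in the sense demanded by the theorem. In the adversarial identity case $\pi_t \equiv \mathrm{id}$ one has $|b_t| \equiv 0$, so $|Y_t| - |X_t| = |\tilde Y_t| - |X_t|$ is mean-zero under any coupling of two standard lazy walks; by the LIL its lower envelope $-\sqrt{t \log \log t}$ is reached infinitely often, so the literal eventual interpretation of $t^{1/2-o(1)}$ cannot be achieved via independent coupling alone. To close this gap one must build a more refined coupling that harnesses the positive drift $c > 0$ (available precisely when $d>2$) to maintain a persistent lead of $(|\tilde Y_t|)$ over $(|X_t|)$; a natural candidate is a Skorokhod-type embedding of both depth processes into a common Brownian motion with drift, with a built-in time offset. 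Arranging such an offset so that each process retains the exact marginal law of the depth of a lazy walk on $\T_d$ while dominating the two-sided LIL fluctuations of $|\tilde Y_t|-|X_t|$ is the crux of the proof.
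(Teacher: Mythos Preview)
Your reduction via $\sigma_t=\pi_t\cdots\pi_1$ and the tree-median identity is correct and is essentially the paper's first step (its Lemma giving $|Y_t|\ge |X'_t|-2\log t$ eventually for a coupled lazy walk $X'_t$). Your Borel--Cantelli bound $h_t=O(\log t)$ is the same argument: conditional on $|\tilde Y_t|$, the walk is uniform on its sphere, so the chance it shares a prescribed length-$k$ prefix with the geodesic to $b_t$ is at most $(d-1)^{1-k}/d$.

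The genuine gap is exactly where you say it is, and you do not close it. The theorem asserts $|Y_t|-|X_t|\ge t^{1/2-o(1)}$ for \emph{all} large $t$, and you have only produced this along a random subsequence via the LIL under independent coupling; you correctly observe that in the identity case the LIL also forces $|\tilde Y_t|-|X_t|\le -\sqrt{t\log\log t}$ infinitely often, so independent coupling cannot work. What is missing is an explicit construction of a coupling of two copies of the depth process (a biased nearest-neighbour walk on $\Z$) under which $X'_t-X_t\ge \sqrt t/(\log t)^C$ eventually. The paper supplies this as a separate lemma: one couples the increments over dyadic blocks $[2^n,2^{n+1}]$ by first coupling the binomial endpoints so that with probability bounded below the primed walk gains $\sim 2^{n/2}/n^2$ and with probability $O(n^{-2})$ it falls behind, and then fills in the interiors monotonically. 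Borel--Cantelli on the ``fall behind'' events makes the gap eventually non-decreasing, and a second Borel--Cantelli on the independent ``gain'' events guarantees a recent gain of size $\sqrt t/\mathrm{polylog}(t)$ at every scale.

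Your proposed Skorokhod-embedding route is not obviously viable: a fixed time offset in a Brownian motion with drift does not by itself defeat the two-sided LIL of the difference, and in fact no coupling can achieve $X'_t-X_t\ge c\sqrt t$ eventually (the event $\{X_t\ge 0,\ X'_t<c\sqrt t\}$ has probability bounded below, so Fatou forces $X'_t-X_t<c\sqrt t$ infinitely often with positive probability). Any successful construction must therefore aim for $\sqrt t/\mathrm{polylog}(t)$ rather than $\sqrt t$; the dyadic binomial coupling is one concrete way to do this, and something of comparable precision is required to finish your argument.
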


The theorem is interesting even when each $\pi_t$ is the identity.
It states that there is a way to couple two lazy random walks so that one is significantly more distant than the other. 
The result is tight is the sense that the $o(1)$ term cannot be dropped entirely.
Our proof gives a quantitative estimate for this term and yields that $t^{1/2-o(1)}$ can be replaced with $\sqrt t/(\log^C t)$ for some constant $C>0$.
See \cref{lem:SRW-coupling} and the second remark following it.

\subsection{A spectral argument}

One natural approach towards proving the results above is using spectral methods (see~\cite{morris2005evolving} and references within).
Specifically, the transition kernel on $\ell^2(V)$ is a contraction with norm $\rho<1$, and application of a permutation is an isometry on $\ell^2(V)$.
Thus $\|q_t\|_2 \leq \rho^t$ decays exponentially.
A positive lower bound on the lower speed of $Y_t$ follows easily.
Moreover, this argument holds for any non-amenable graph.
However, the resulting bound on the speed is not sharp.

The proof of a spectral gap uses an isoperimetric inequality for the tree.
Not surprisingly, our proofs also use isoperimetric inequalities;
see \cref{prop:iso,prop:iso-exact} below.
\cref{prop:iso} is a non-standard isoperimetric inequality, which takes into account the amount of ``isolated'' points in the set of interest.
\cref{P:ki-mi-inequality} is a significant generalization of the isoperimetric inequality using the language of majorization.

\section{Isoperimetry}

As noted, our arguments rely on isoperimetric properties of the tree.
However, to get the strongest possible comparison between the permuted and regular random walks we need sharp isoperimetric inequalities, which we now proceed to prove.

Recall that $N(v)$ is the neighborhood of a vertex $v$, including $v$ itself.
For $J \subset V$, the neighborhood of $J$ is defined by
\[N(J) = \bigcup_{v \in J} N(v).\]
To analyze the behavior of the random walk, we need to understand the boundary in more detail.
For $J \subset V$ and $i \in [d+1]$, define
\[ K_i(J) = \{ v \in V : |N(v) \cap J| \ge i \} .\]
In particular, the set $K_1(J)$ is the neighborhood $N(J)$.

A \textbf{partition} is a sequence $\mu = (\mu_1,\mu_2,\dots,\mu_\ell)$ with $\mu_1 \geq \mu_2 \geq \dots \geq \mu_\ell \geq 0$.
Note that usually trailing 0's are omitted, but for us it is convenient to have the length of the partitions be fixed, so we may include 0's. 
The size of the partition is defined by $|\mu| = \sum_i \mu_i$.
The dominance order on partitions is defined as follows.
For partitions $\mu,\lambda$, we write $\lambda \prec \mu$ if $|\mu| = |\lambda|$ and
\begin{equation}\label{eq:dominance}
  \lambda_1 + \dots+\lambda_r \leq \mu_1+\dots+\mu_r,
  \qquad \text{ for all $r$}.
\end{equation}

The following majorization statement is an extension of the standard isoperimetric inequality for the tree.

\begin{prop}\label{P:ki-mi-inequality}
  Let $J \subset V$ be finite and let $B$ be the quasi-ball with $|B|=|J|$.
  Let $k_i=|K_i(J)|$ and $m_i=|K_i(B)|$.
  Then $(k_i)$ dominates $(m_i)$ as partitions:
  $(k_1,\dots,k_{d+1}) \succ (m_1,\dots,m_{d+1})$.
\end{prop}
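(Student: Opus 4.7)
The plan is to reduce the partition dominance to a family of simpler inequalities and then prove each via a compression argument that iteratively transforms $J$ into $B$.

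First I would rewrite the partial sums: for each $r \in \{1,\ldots,d+1\}$,
\[
  \sum_{i=1}^r k_i \;=\; \sum_{v \in V} \min\bigl(|N(v)\cap J|,\,r\bigr),
\]
and analogously for $B$. Since $\sum_{i=1}^{d+1} k_i = \sum_v |N(v)\cap J| = (d+1)|J|$ depends only on $|J|$, both partitions already have equal total mass, so the dominance $(k_i)\succ(m_i)$ is equivalent to $F_r(J) \ge F_r(B)$ for all $r \le d$, where $F_r(J) := \sum_v \min(|N(v)\cap J|, r)$. The case $r=1$ is the classical isoperimetric inequality on the $d$-regular tree, which the quasi-ball saturates; this serves as a base case and sanity check.

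For general $r$, I would set up a shifting argument: produce a sequence $J = J_0, J_1, \ldots, J_T = B$ with $|J_t|=|J|$, along which $J_t$ moves ``closer'' to $B$ in some well-chosen partial order, and such that $F_r(J_{t+1}) \le F_r(J_t)$ for every $r$ and every $t$. A natural elementary move is a parent-shift: pick $u \in J$ whose parent $p$ is not in $J$, and set $J' = (J \setminus \{u\}) \cup \{p\}$. Under this swap, the function $g_J := |N(\cdot)\cap J|$ changes only on $N(u) \triangle N(p)$: the $d-1$ children of $u$ each lose a $J$-neighbor, while the $d-1$ vertices in $N(p) \setminus N(u)$ each gain one. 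A decrease of $g_J$ at $v$ from $a$ to $a-1$ lowers $F_r$ by exactly $1$ when $a \le r$, and an increase from $b$ to $b+1$ raises $F_r$ by exactly $1$ when $b<r$; otherwise $F_r$ is unchanged at that vertex.

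The main obstacle is to verify $F_r(J') \le F_r(J)$ for every $r$ simultaneously: this is not automatic for an arbitrary parent-shift, since it depends on the local values of $g_J$ at the children versus at the gaining side. I would handle this by scheduling the shifts in order of decreasing depth of $u$, so that by the time we act near depth $n$ the shallower gaining vertices have already accumulated large $g_J$ values (rendering their contribution to the increases of $F_r$ negligible for the relevant $r$), while the deeper losing children typically have small $g_J$. This mirrors the monotone-in-depth shape of $g_B$ for a quasi-ball. After these upward compressions bring $J$ into a depth-filled form, I would perform a secondary intra-level rearrangement that groups children of a common parent together (as required by the definition of quasi-ball), using a similar swap of two same-depth vertices and a similar local accounting. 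Verifying that both stages preserve the dominance --- plausibly via a joint induction on depth and on the number of ``out-of-place'' vertices of $J$ --- is the technical heart of the argument.
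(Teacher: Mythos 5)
Your approach is genuinely different from the paper's. The paper's proof works directly: it computes the $m_i$ explicitly in terms of $|B_{n-1}|$ and the parameters $a,c$ describing the quasi-ball, establishes the cases $r=1$ and $r=2$ of the dominance inequality using two sharp isoperimetric facts (the exact formula $|N(J)|=(d-1)|J|+\kappa_1(J)+\kappa_2(J)$ and a refinement involving $\iso(J)$ and $\conn(J)$, together with $\conn(J)\subseteq K_2(J)$), and then closes the remaining cases $r\in\{3,\dots,d\}$ by a short induction using only the monotonicity $k_r\ge k_{r+1}\ge\cdots$. Your reduction of dominance to the equivalent family $F_r(J)\ge F_r(B)$ via $\sum_{i\le r}k_i=\sum_v\min(|N(v)\cap J|,r)$ and the total-mass computation $(d+1)|J|$ are both correct and are a clean reformulation the paper does not make explicit.

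However, the compression argument as proposed has genuine gaps that you partly acknowledge but do not resolve. First, a single parent-shift need not decrease every $F_r$: the local accounting you set up correctly shows the net change of $F_r$ is $-\#\{c\in N(u)\setminus N(p):g_J(c)\le r\}+\#\{w\in N(p)\setminus N(u):g_J(w)<r\}$, and there is no a priori reason for this to be $\le 0$; the claim that ``scheduling by decreasing depth'' fixes this is a heuristic, not an argument, and one would need to exhibit a concrete invariant making it go through. Second, and more structurally, the two-stage plan (parent-shifts, then intra-level rearrangement) does not reach every quasi-ball: once no parent-shift is available, $J$ is merely downward-closed (a subtree containing the root), which is strictly weaker than being depth-filled --- e.g.\ $\{v_0,a,a_1\}$ in $\T_3$ has no parent-shift available yet is not contained in the quasi-ball of size $3$. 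You would need a third type of move that transfers a vertex to a strictly shallower level whose parent is already in $J$, and you would need to re-verify $F_r$-monotonicity for that move too. Finally, the target of compression is not unique (any root-fixing automorphic image of $B$ is equally extremal), so the argument should be phrased as terminating at \emph{some} set with the quasi-ball's profile rather than at $B$ itself. In short: the reformulation is right, the approach is plausible in spirit (compression is a standard tool for isoperimetry), but the heart of the argument --- that a suitable sequence of moves exists making all $F_r$ simultaneously non-increasing --- is not established, whereas the paper avoids this entirely by computing $m_i$ and invoking \cref{prop:iso-exact,prop:iso} directly.
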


To prove this result, we need a couple of lemmas on the isoperimetric behavior of the tree.
Let $\kappa_1(J)$ denote the number of connected components induced by $J$.
Let $\kappa_2(J)$ denote the number of connected components induced by $J$ in the graph in which edges are added between all pairs of vertices that are at distance 2 from each other in the tree.
The first lemma is a formula for $|N(J)|$ for general $J$:

\begin{prop}\label{prop:iso-exact}
  For every finite $J \subset V$,
  \[ |N(J)| = (d-1)|J| + \kappa_1(J) + \kappa_2(J) .\]
\end{prop}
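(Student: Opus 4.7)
The plan is an induction on $|J|$, with the case $|J|=0$ trivial. For the inductive step, since the subgraph of $\T$ induced on $J$ is a forest, some $v \in J$ has at most one neighbor inside $J$. Set $J' = J \setminus \{v\}$ and let $w_1,\dots,w_d$ denote the neighbors of $v$ in $\T$; the aim is to show that the identity for $J'$ implies it for $J$.

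The change in $|N(J)|$ is a direct count: the new vertices in $N(J)$ are $v$ (if it is not already in $N(J')$) together with those $w_i$ not already in $N(J')$. Splitting according to whether $v$ has zero or one neighbor in $J$ gives $|N(J)|-|N(J')|$ equal to $d+1-s$ or $d-1-s$ respectively, where $s$ counts the number of $w_i$ lying in $N(J')$. The change in $\kappa_1$ is immediate: it grows by $1$ if $v$ is isolated in $J$ and by $0$ otherwise.

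The main content is the corresponding statement for $\kappa_2$, and here the key observation is a branch-separation property: the subtrees of $\T \setminus \{v\}$ rooted at the distinct $w_i$'s are mutually disconnected inside the distance-$\leq 2$ graph on $J'$. Indeed, any length-$\leq 2$ hop between two different branches must pass through $v$, or must have both endpoints among the $w_i$; but $v \notin J'$ and at most one $w_i$ lies in $J'$, so no such bridging hop exists within $J'$. Consequently, adding $v$ back merges exactly the distinct $\kappa_2$-components of $J'$ that meet some $w_i$, which pins down $\kappa_2(J)-\kappa_2(J')$ in terms of the same count $s$ (gaining a free isolated component when $v$ itself has no neighbor in $J$).

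Combining the three contributions in each subcase yields $|N(J)|-|N(J')|=(d-1)+(\kappa_1(J)-\kappa_1(J'))+(\kappa_2(J)-\kappa_2(J'))$, which is exactly the required increment and closes the induction. The one delicate point is the branch-separation property used to compute the change in $\kappa_2$; once it is in hand, the rest is straightforward bookkeeping over the two subcases.
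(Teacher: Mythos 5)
Your proof takes the same broad inductive strategy as the paper---remove one vertex $v$ and track the increments---but the choice of $v$ and the key lemma for the $\kappa_2$ increment are genuinely different. The paper picks $v$ of \emph{maximum depth} in $J$, so the only vertices within distance $2$ of $v$ that can lie in $J'$ are its parent, grandparent and siblings; these are pairwise at distance $\le 2$ in $\T$, hence form a single $\kappa_2$-clique, and removing $v$ can therefore decrease $\kappa_2$ by at most one (namely by the indicator of the event that nothing of $J'$ is within distance $2$ of $v$). You instead pick any leaf of the induced forest and observe that $v$ may be close to vertices of $J'$ in several branches; your branch-separation lemma (a short cross-branch hop would have to pass through $v$ or have both endpoints among the $w_i$, which is ruled out because $v \notin J'$ and at most one $w_i \in J'$) shows that these branches yield distinct $\kappa_2$-components in $J'$, giving $\kappa_2(J) - \kappa_2(J') = 1 - s$. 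Both routes work; yours avoids invoking the root and depth, at the price of allowing $\kappa_2$ to jump by more than one.

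One small arithmetic slip: when $v$ has exactly one neighbor $w_1 \in J$, the increment $|N(J)|-|N(J')|$ is $d - s$, not $d-1-s$. Compared to the isolated case, only $v$ itself drops from the list of ``new'' vertices in $N(J)$ (it is already in $N(J')$ because $w_1 \in J'$), which is a loss of $1$, not $2$. With $\kappa_1(J)-\kappa_1(J')=0$ and $\kappa_2(J)-\kappa_2(J')=1-s$, the required increment $(d-1)+0+(1-s)=d-s$ then matches, so the induction closes once the slip is corrected.
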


\begin{proof}
We prove the claim by induction on $|J|$. The base case when $|J|=0$ is trivial. Let $J$ be non-empty. Let $v \in J$ be 
a vertex of maximum depth in $J$.
Let $N_1=N(v) \cap (J \setminus \{v\})$ and $N_2=N(N(v)) \cap ( J \setminus \{v\})$.  
The following two equalities hold:
\[ \kappa_1(J \setminus \{v\})=\kappa_1(J)-\mathbf{1}_{\{N_1=\emptyset\}} \qquad
\text{and}  \qquad \kappa_2(J \setminus \{v\})=\kappa_2(J)-\mathbf{1}_{\{N_2=\emptyset\}}.\]
The induction hypothesis implies
\[ |N(J \setminus \{v\})|= (d-1)|J| + \kappa_1(J) + \kappa_2(J) - (d-1 + \mathbf{1}_{\{N_1=\emptyset\}} + \mathbf{1}_{\{N_2=\emptyset\}}) .\]
It remains to show that
\[ |N(J)|-|N(J \setminus \{v\})|= d-1 + \mathbf{1}_{\{N_1=\emptyset\}} + \mathbf{1}_{\{N_2=\emptyset\}} .\]
The left-hand side equals 
$$|N(J) \setminus N(J \setminus \{v\})|
= |N(v) \setminus N(J \setminus \{v\})| = d+1 - |N(v) \cap N(J \setminus \{v\})|.$$ 
So we need to show that
\[ |N(v) \cap N(J \setminus \{v\})|= 2 - \mathbf{1}_{\{N_1=\emptyset\}} + \mathbf{1}_{\{N_2=\emptyset\}}
= \mathbf{1}_{\{N_1 \neq \emptyset\}} + \mathbf{1}_{\{N_2\neq\emptyset\}} .\]
By the choice of $v$,
there are at most two vertices in $N(v) \cap N(J \setminus \{v\})$;
the vertex $v$ and its parent.
The vertex $v$ is in $N(J \setminus \{v\})$ iff $N_1 \neq \emptyset$.
Its parent is in $N(J \setminus \{v\})$ iff $N_2 \neq \emptyset$.
\end{proof}

For the next lemma, we also need the following definitions.
The sets of isolated points in $J$ and connected points in $J$ are defined  by
\begin{align*}
  \iso(J) = \{ v \in J : N(v) \cap J =\{v\}\} \quad \text{and} \quad
  \conn(J) = J \setminus \iso(J).
\end{align*}

\begin{prop}
  \label{prop:iso}
  For every non-empty $J \subset V$, 
  \[|N(J)| \geq \begin{cases}
      1 + d|J| & \conn(J) = \emptyset \\
      2 + d |\iso(J)| + (d-1)|\conn(J)| & \conn(J) \neq\emptyset.
    \end{cases}
  \]
\end{prop}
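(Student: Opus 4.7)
The plan is to derive this estimate directly from \cref{prop:iso-exact}, which gives the exact identity $|N(J)| = (d-1)|J| + \kappa_1(J) + \kappa_2(J)$. The entire task then reduces to producing a good lower bound on $\kappa_1(J) + \kappa_2(J)$ in each of the two regimes, and the hypothesis about $\conn(J)$ is exactly what controls these component counts.

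For the first case, when $\conn(J) = \emptyset$, every vertex of $J$ is isolated in the subgraph induced on $J$, so each vertex forms its own connected component and $\kappa_1(J) = |J|$. Since $J$ is non-empty, the distance-at-most-$2$ graph on $V$ restricted to $J$ has at least one component, so $\kappa_2(J) \geq 1$. Substituting into the identity gives $|N(J)| \geq (d-1)|J| + |J| + 1 = d|J| + 1$, as required.

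For the second case, when $\conn(J) \neq \emptyset$, I would first observe that the induced subgraph on $J$ splits naturally into $|\iso(J)|$ singleton components contributed by the isolated vertices (each isolated vertex has no $J$-neighbor other than itself, so it is truly a component by itself and, importantly, cannot merge with any other component), plus the components of the induced subgraph on $\conn(J)$. Since $\conn(J) \neq \emptyset$, the latter contributes at least one additional component, so $\kappa_1(J) \geq |\iso(J)| + 1$. Combined with the trivial bound $\kappa_2(J) \geq 1$ from non-emptiness of $J$, we obtain $\kappa_1(J) + \kappa_2(J) \geq |\iso(J)| + 2$. Plugging this into \cref{prop:iso-exact} and rewriting $|J| = |\iso(J)| + |\conn(J)|$ yields
\[ |N(J)| \geq (d-1)(|\iso(J)| + |\conn(J)|) + |\iso(J)| + 2 = d|\iso(J)| + (d-1)|\conn(J)| + 2. \]

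The argument is short because \cref{prop:iso-exact} does the heavy lifting; the only real obstacle is making sure the decomposition $\kappa_1(J) = |\iso(J)| + \kappa_1(\conn(J))$ is justified, which requires noting that the definition of $\iso(J)$ (with the convention $v \in N(v)$) really does force isolated vertices to have no $J$-neighbors outside themselves, so no edge in the induced graph crosses between $\iso(J)$ and $\conn(J)$.
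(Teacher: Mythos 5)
Your proof is correct and takes essentially the same route as the paper: apply \cref{prop:iso-exact} and then bound $\kappa_1(J) \ge |\iso(J)| + \mathbf{1}_{\{\conn(J)\neq\emptyset\}}$ and $\kappa_2(J) \ge 1$. The paper simply merges the two cases into a single chain of inequalities using the indicator, whereas you spell out the two regimes separately, but the underlying component-counting argument is identical.
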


\begin{proof}
  Using \cref{prop:iso-exact},
  \begin{align*}
    |N(J)| 
    & = (d-1)(|\iso(J)|+|\conn(J)|) + \kappa_1(J) + \kappa_2(J) \\
    & \geq (d-1)(|\iso(J)|+|\conn(J)|) + (|\iso(J)| + \mathbf{1}_{\{\conn(J)\neq\emptyset\}})
    + 1 . \qedhere
  \end{align*}
\end{proof}

\begin{proof}[Proof of \cref{P:ki-mi-inequality}]
  The fact that $(k_i)$ and $(m_i)$ are decreasing is obvious.
  These are partitions of the same size $s = (d+1)|J|$.
  If $|J|=1$ then the statement trivially holds, so we can assume $|J|>1$.
  The choice of order on $V$ implies there is $n \geq 0$ so that
  $B_n \subseteq B \subsetneq B_{n+1}$, where $B_n$ is the ball of radius $n$.
  We can write
  \[ |J| = |B| = |B_n| + a(d-1) + c ,\]
  where $a,c$ are non-negative integers so that $c < d-1$.

  The tree is simple enough so that we can compute all the $m_i$'s in terms of these:
  \begin{align*}
    m_1 & = (d-1)|J| + 2 , \\
    m_2 &  = |J| , \\
    \forall \ 3 \leq i \leq c+2 \qquad
    m_i & = |B_{n-1}| + a + 1 , \\
    \forall \ c+3 \leq i \leq d+1 \qquad
    m_i & = |B_{n-1}| + a . 
  \end{align*}
  The case $r=1$ of \eqref{eq:dominance} now holds by \cref{prop:iso-exact}:
  \[ k_1 = |N(J)| \ge (d-1)|J| + 2 = m_1 .\]
  The case $r=2$ is proved as follows.
  If $k_2 = 0$ then $k_i = 0$ for all $i \geq 2$ and the proof is complete.
  On the other hand, if $k_2 \geq 1$ then by \cref{prop:iso}, and because $\conn(J) \subseteq K_2(J)$,
\[ k_1+k_2  \geq 
(d-1)|J| + |\iso(J)| + \mathbf{1}_{\{\conn(J)\neq\emptyset\}}
 + 1 + k_2 \geq d|J|+2 = m_1+m_2 .\]
For $r \in \{3,4,\ldots,d\}$, proceed by induction. 
Because $k_r \geq k_{r+1} \geq \ldots \geq k_{d+1}$, we have
$$k_r \geq a_r:= \frac{s - \sum_{i \in [r-1]} k_i}{d-r+2}.$$
By induction,
\begin{align*}
\sum_{i \in [r]} k_i & \geq a_r + 
\sum_{i \in [r-1]} k_i \\
& = \frac{s}{d-r+2} + \frac{d-r+1}{d-r+2} \sum_{i \in [r-1]} k_i \\
& \geq \frac{s}{d-r+2} + \frac{d-r+1}{d-r+2} \sum_{i \in [r-1]} m_i  \\
& = b_r  + \sum_{i \in [r-1]} m_i ,
\end{align*}
where 
\begin{align*}
b_r 
:= \frac{s - \sum_{i \in [r-1]} m_i}{d-r+2} .
\end{align*}
If $r \leq c+2$, then
\begin{align*}
b_r 
& = \frac{(c+2-r+1)(|B_{n-1}| + a + 1)
+ (d+1-c-2)(|B_{n-1}| + a)}{d-r+2} \\
& = \frac{(d-r+2)(|B_{n-1}| + a) + c+2-r}{d-r+2} ,
\end{align*}
and if $r > c+2$, then
\begin{align*}
b_r 
= \frac{(d-r+2)(|B_{n-1}| + a)}{d-r+2} .
\end{align*}
It follows that $m_r = \lceil b_r \rceil$.
All $k_i$'s and $m_i$'s are integers, so the desired inequality follows.
\end{proof}

\section{Lazy random walks}
\label{sec:PuttingLazy}

The following proposition presents the key link between the isoperimetric inequality and the behavior of random walks.

\begin{prop}\label{prop:q*}
  Let $J \subset V$ be finite and let $B$ be the quasi-ball with $|B|=|J|$.
  Let $k_i=|K_i(J)|$ and $m_i=|K_i(B)|$. 
  For every distribution $q$,
  \[ \sum_{i \in [d+1]} q^*(k_i) \le \sum_{i \in [d+1]} q^*(m_i) .\]
\end{prop}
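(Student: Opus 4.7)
The plan is to view this as an immediate consequence of \cref{P:ki-mi-inequality} combined with a Karamata-type (Hardy–Littlewood–Pólya) inequality for concave functions under majorization. The function $q^*$ plays the role of the concave function, the sequence $(k_i)$ is the ``larger'' vector in the dominance order, and $(m_i)$ is the ``smaller'' one; since the roles of majorizer and majorized swap when one tests against a concave rather than convex function, the inequality flips to give the desired upper bound on $\sum q^*(k_i)$.

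First I would verify that $q^* : \{0,1,2,\dots\} \to [0,1]$ is concave and non-decreasing. This is just the definition unpacked: if $p_1 \ge p_2 \ge \cdots$ denote the atoms of $q$ arranged in weakly decreasing order, then $q^*(j) = p_1 + \cdots + p_j$, so the increments $q^*(j) - q^*(j-1) = p_j$ are non-negative and non-increasing in $j$. Equivalently, $q^*$ extends to a concave non-decreasing function on $[0,\infty)$ by piecewise linear interpolation.

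Next I would invoke \cref{P:ki-mi-inequality}, which states that $(k_1,\dots,k_{d+1}) \succ (m_1,\dots,m_{d+1})$ as partitions. Both sequences are already weakly decreasing and have the same total sum $(d+1)|J|$, so this is precisely majorization in the classical sense. The standard Hardy–Littlewood–Pólya theorem then gives, for every concave $f$,
\[
  \sum_{i \in [d+1]} f(k_i) \;\le\; \sum_{i \in [d+1]} f(m_i).
\]
Applying this with $f = q^*$ yields the proposition.

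The only conceivable obstacle is justifying the concave version of Karamata's inequality with $f$ defined on integers rather than an interval, but this is routine: one either quotes the discrete version directly, or uses the piecewise-linear concave extension of $q^*$ to $[0,\infty)$ and applies the continuous version. Either way, the proof is essentially a one-line deduction from \cref{P:ki-mi-inequality} once the concavity of $q^*$ is recorded.
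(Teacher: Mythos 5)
Your proof is correct and follows exactly the paper's route: establish that $q^*$ is (extendable to) a concave non-decreasing function because its increments are the atoms of $q$ in decreasing order, combine the majorization $(k_i) \succ (m_i)$ from \cref{P:ki-mi-inequality} with the Karamata/Hardy--Littlewood--P\'olya inequality for concave functions (\cref{thm:conc}), and handle the integer-versus-interval domain issue via piecewise linear interpolation. No meaningful differences from the paper's argument.
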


This may seem surprising until one realizes that $q^*$ can be any function on $\N$ that is increasing from $0$ to $1$ and is concave.
The proof of \cref{prop:q*} is based on the following majorization inequality, known as the Hardy--Littlewood--P\'olya inequality and Karamata's inequality, a version of which was first proved by Schur;
see e.g. \cite[Theorem~3.C.1]{marshall1979inequalities}.
Note that the definition of the dominance order $\mu\succ\lambda$ extends verbatim to partitions of a real number with real instead of integer parts, and so this applies also for non-integer dominated sequences.
In our setting, $k_i$ and $m_i$ are integers.

\begin{theorem}\label{thm:conc} 
  Let $I\subset \R$ be an interval and let $f:I \to \R$ be concave.
  If $\mu,\lambda \in I^t$ are two partitions such that $\mu \succ \lambda$, then
  \[ \sum_{i \in [t]} f(\mu_i) \leq \sum_{i \in [t]} f(\lambda_i). \]
\end{theorem}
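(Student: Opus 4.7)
The plan is to follow the classical Karamata/Hardy--Littlewood--P\'olya argument based on supporting lines and Abel summation. The concavity of $f$ gives affine upper bounds at every point, and majorization is exactly the right statement about partial sums to push through a summation-by-parts of these affine bounds.

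First I would, for each $i\in[t]$, choose a supergradient $c_i$ of $f$ at $\lambda_i$, that is a real number with $f(x)\le f(\lambda_i)+c_i(x-\lambda_i)$ for every $x\in I$. Because $f$ is concave on $I$ its one-sided derivatives exist at every interior point and are non-increasing, so I can make this selection in such a way that $c_1\le c_2\le\cdots\le c_t$, using that $\lambda_1\ge\lambda_2\ge\cdots\ge\lambda_t$ (at endpoints of $I$ one just takes the appropriate one-sided derivative). Evaluating the supporting-line inequality at $x=\mu_i$ and summing over $i$ yields
\[
\sum_{i\in[t]} f(\mu_i)-\sum_{i\in[t]} f(\lambda_i)\;\le\;\sum_{i\in[t]} c_i(\mu_i-\lambda_i).
\]

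Next I would analyze the right-hand side by Abel summation. Set $M_r=\sum_{i\le r}\mu_i$, $L_r=\sum_{i\le r}\lambda_i$, and $D_r=M_r-L_r$, with $D_0=0$. The majorization hypothesis $\mu\succ\lambda$ says precisely that $D_r\ge 0$ for all $r$ and $D_t=0$. Writing $\mu_i-\lambda_i=D_i-D_{i-1}$ and summing by parts,
\[
\sum_{i\in[t]} c_i(D_i-D_{i-1})\;=\;c_tD_t-c_1D_0-\sum_{i=1}^{t-1}(c_{i+1}-c_i)D_i\;=\;-\sum_{i=1}^{t-1}(c_{i+1}-c_i)D_i.
\]
Since $c_{i+1}-c_i\ge 0$ by the monotone choice of supergradients and $D_i\ge 0$ by majorization, the right-hand side is non-positive, which together with the previous display gives $\sum f(\mu_i)\le\sum f(\lambda_i)$.

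The only delicate point, and the one I would handle most carefully, is the monotone selection of the supergradients $c_i$ when $f$ is merely concave rather than differentiable, or when some $\lambda_i$ sit at the boundary of $I$. This is a standard issue resolved by working with one-sided derivatives of $f$ (which always exist on the interior of $I$ and extend appropriately at endpoints), and is the only place where the concavity enters nontrivially; everything else is telescoping algebra.
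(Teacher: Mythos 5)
The paper does not prove \cref{thm:conc}; it cites it as the Hardy--Littlewood--P\'olya / Karamata majorization inequality, with the reference to Marshall--Olkin (Theorem~3.C.1). Your proof is a correct rendition of the canonical argument for that result, namely supergradients at the $\lambda_i$ combined with Abel summation, so it matches the standard proof that the paper points to. On the one ``delicate'' point you flag: if some $\lambda_i$ lies at an endpoint of $I$ where the needed one-sided derivative is infinite, note that the dominance relation forces $\mu_1\ge\lambda_1$ (so a right-endpoint $\lambda_1$ gives $\mu_1=\lambda_1$) and, reading partial sums from the bottom, $\mu_t\le\lambda_t$ (so a left-endpoint $\lambda_t$ gives $\mu_t=\lambda_t$); in either degenerate case the supporting-line inequality at that index holds with an arbitrary finite slope, so the monotone selection of $c_1\le\cdots\le c_t$ goes through without issue.
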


\begin{proof}[Proof of \cref{prop:q*}]
  To apply \cref{thm:conc,P:ki-mi-inequality} we need to extend $q^*$ to a concave function.
  By construction, the function $q^*:\N \to [0,1]$ is increasing and can be written as $q^*(j) = \sum_{i \in [j]} D(i)$ where $D : \N \to [0,1]$ is a decreasing function.
  Thus extending $q^*$ to $\R_+$ by a piecewise linear interpolation is increasing and concave.
\end{proof}

The following observation helps to establish the property that a distribution is greedily arranged. 

\begin{obs}\label{obs:quasi-ball}
  Let $B$ be a quasi-ball and let $i \in [d+1]$.
  Then, $K_i(B)$ is a quasi-ball. 
\end{obs}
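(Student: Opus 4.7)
The plan is to prove the following strengthening: for a quasi-ball $B$, the function $f \colon V \to \N$ defined by $f(v) := |N(v) \cap B|$ is non-increasing along the ordering $v_0, v_1, v_2, \dots$. Once this is established the conclusion follows immediately, since $K_i(B) = \{v \in V : f(v) \geq i\}$ is then an initial segment of the ordering, hence a quasi-ball.

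To prove the monotonicity of $f$ I would exploit the structure of the quasi-ball $B$ together with the compatibility of the ordering between consecutive depths. Pick the unique $n \geq 0$ with $B_n \subseteq B \subsetneq B_{n+1}$, and set $A := B \cap \partial B_{n+1}$. By the definition of a quasi-ball, $A$ is an initial segment of $\partial B_{n+1}$; the property that children of earlier depth-$n$ vertices precede children of later ones then forces the set of parents of $A$ to be an initial segment of $\partial B_n$, with at most one vertex at its tail whose children are only partly contained in $A$.

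The function $f$ can now be read off depth by depth: it equals $d+1$ on depths strictly less than $n$; equals $2$ plus the number of children of $v$ lying in $A$ on $\partial B_n$; equals $1 + \mathbf{1}_{\{v \in A\}}$ on $\partial B_{n+1}$; equals $\mathbf{1}_{\{\mathrm{parent}(v) \in A\}}$ on $\partial B_{n+2}$; and vanishes beyond. Monotonicity within each depth then follows from the initial-segment structure of $A$ together with the fact that the set of children of an initial segment at depth $n+1$ is itself an initial segment at depth $n+2$. Monotonicity across depth boundaries reduces to the comparison $d+1 \geq 2 \geq 1 \geq 0$ of the relevant extremal values of $f$, which inspection shows is respected throughout.

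I do not anticipate any conceptual obstacle; the only real work is careful bookkeeping. Two edge cases deserve brief separate treatment but are routine: $n = 0$, where $v_0$ has no parent and the formula on $\partial B_n$ simplifies accordingly, and $A = \emptyset$ (i.e., $B = B_n$), where the depth-$n$ computation degenerates to $f \equiv 2$ on $\partial B_n$.
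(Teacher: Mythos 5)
Your proposal is correct and takes essentially the same approach as the paper: the paper enumerates each $K_i(B)$ explicitly depth by depth (each is a ball $B_{n+1}$, $B$, or $B_{n-1}$ plus an initial segment of the next sphere), while you phrase the identical depth-by-depth analysis as monotonicity of $f(v) = |N(v) \cap B|$ along the ordering, which packages the $d+1$ cases into one statement. The geometric content and the verification are the same; your reformulation is a slightly cleaner way to organize the bookkeeping.
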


\begin{proof}
Write $B$ as $B = \{v_0,v_1,\ldots,v_i\}$. 
The choice of order on $V$ implies
there is $n \geq 0$ so that
$B_n \subseteq B \subsetneq B_{n+1}$, and we can write
$|B| = |B_n| + a(d-1) + c$,
where $a,c$ are non-negative integers so that $c < d-1$.
Analyze the different $K_i(B)$'s as follows.
The set $K_1(B) = N(B)$ contains $B_{n+1}$ 
and some of the smallest elements in $\partial B_{n+2}$.
The set $K_2(B)$ is equal to $B$.
For $i \in \{3,\ldots,c+2\}$,
the set $K_i(B)$ contains $B_{n-1}$
and the $a+1$ smallest elements in $\partial B_n$.
For $i \in \{c+3,\ldots,d+1\}$,
the set $K_i(B)$ contains $B_{n-1}$
and the $a$ smallest elements in $\partial B_n$.
\end{proof}

We are now ready to complete the proof of our main results.

\begin{proof}[Proof of \cref{thm:moreCenterWithT2}]
  Let $J \subset V$ and let $B$ be a quasi-ball of the same size.
  For $i \in [d+1]$, let $k_i=|K_i(J)|$ and $m_i=|K_i(B)|$.
  We have
  \begin{align}
    q'(J) &= \frac{1}{d+1} \sum_{i \in [d+1]} q(K_i(J)) \nonumber \\
    & \leq \frac{1}{d+1} \sum_{i \in [d+1]} q^*(k_i) \label{eq:s1} \\
    & \leq \frac{1}{d+1} \sum_{i \in [d+1]} q^*(m_i) \label{eq:s2} \\
    & \leq \frac{1}{d+1} \sum_{i \in [d+1]} p^*(m_i) \label{eq:s3} \\
    & = \frac{1}{d+1} \sum_{i \in [d+1]} p(K_i(B)) \label{eq:s4} \\
    & = p'(B). \nonumber
  \end{align}
  Here, the first and last equalities follow from the definition of the lazy random walk;
  \eqref{eq:s1} follows from the definition of $q^*$;
  \eqref{eq:s2} follows from Proposition~\ref{prop:q*}; \eqref{eq:s3} holds because $p$ majorizes $q$;
  finally, \eqref{eq:s4} follows from Observation~\ref{obs:quasi-ball} and the assumption that $p$ is greedily arranged. 

  For the set $J$ that achieves $q'^*(s)$, the above implies that 
  ${q'}^*(s) \leq p'(B) = {p'}^*(s)$.
  The fact that $p'$ is greedily arranged follows from Observation~\ref{obs:quasi-ball}. 
\end{proof}

\begin{proof}[Proof of \cref{thm:lazy}]
  \cref{thm:moreCenterWithT2} implies~\eqref{eq:pt_qt} and in particular $p_t(B_n) \ge q_t(B_n)$ for all $n,t \ge 0$.
  In other words, $|Y_t|$ stochastically dominates $|X_t|$ for every $t \ge 0$.
  This implies that $\E |Y_t| \ge \E |X_t|$.
  It remains to show that $\liminf_{t \to \infty} t^{-1} |Y_t| \ge \frac{d-2}{d+1}$ almost surely.
  For every $\eps > 0$, standard concentration bounds show that for some constants $c,C>0$, 
  \[ \Pr\big[ t^{-1} |X_t| < \tfrac{d-2}{d+1} - \eps \big] \leq C e^{-ct}. \]
  Since $|Y_t|$ stochastically dominates $|X_t|$ for every $t \ge 0$, the same holds with $Y_t$ instead of $X_t$.
  The Borel--Cantelli lemma completes the proof.
\end{proof}

\begin{remark}
  \cref{thm:moreCenterWithT2}, and thus also \cref{thm:lazy,thm:moreCenterWithT}, extends to the lazy random walk in which the probability to stay put is any $\gamma \geq \frac{1}{d+1}$.
  The idea is that if $q'_\gamma$ is the result of a lazy random walk step applied to a distribution $q$ with lazyness $\gamma$, then for any $\gamma>\delta$,
  \[ q'_{\gamma} = \big(\gamma-\delta\big) q + \big(1-\gamma+\delta\big) q'_\delta. \]
  We apply this with $\gamma > \delta = \frac{1}{d+1}$ to get
  \begin{align*}
    q'_\gamma(J) 
    & = \Big(\gamma - \frac{1}{d+1} \Big) q(J) +
    \Big(1-\gamma+ \frac{1}{d+1} \Big)  \frac{1}{d+1} \sum_{i\in [d+1]} q(K_i(J)) \\
    & \leq \Big(\gamma - \frac{1}{d+1} \Big) p(B) +
    \Big(1-\gamma+ \frac{1}{d+1} \Big)  \frac{1}{d+1} \sum_{i\in [d+1]}  p(K_i(B)) = p'(B).
  \end{align*}
\end{remark}

\section{Simple random walks}
\label{sec:Simple}

In this section, we consider simple (non-lazy) walks.
The argument is similar to the lazy case, and we omit some of the details that are unchanged.
For $J \subset V$, let
\[ N'(J) = \bigcup_{v \in J} N'(v). \]
The main difficulty stems from the fact that the tree is bipartite. 
The \textbf{half-ball} $B'_n$ is the set of the form
\[ B'_n = \{ v \in B_n : |v| \equiv n \mod 2\}. \]
A \textbf{half-quasi-ball} is the intersection of a quasi-ball with either $V_0$ or with $V_1$.
Half-qausi-balls have parities.
A \textbf{half-greedily arranged} distribution is a distribution supported on a quasi-ball.

\begin{prop}
  \label{prop:isoSimple}
  For every non-empty $J \subset V$,
  \[ |N'(J)| \geq 1 + (d-1) |J|. \]
\end{prop}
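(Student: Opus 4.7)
The plan is to derive this bound from \cref{prop:iso-exact} by relating $N'(J)$ to $N(J)$. First I would use the identity $N(v) = N'(v) \cup \{v\}$ to observe that $N(J) = J \cup N'(J)$. A vertex $v \in J$ lies in $N'(J)$ precisely when $v$ has a neighbor in $J \setminus \{v\}$, i.e., when $v \in \conn(J)$. Hence $|J \cap N'(J)| = |\conn(J)| = |J| - |\iso(J)|$, and inclusion--exclusion gives
\[ |N'(J)| \;=\; |N(J)| + |J \cap N'(J)| - |J| \;=\; |N(J)| - |\iso(J)|. \]

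Next I would substitute the exact formula from \cref{prop:iso-exact} to obtain
\[ |N'(J)| \;=\; (d-1)|J| + \kappa_1(J) + \kappa_2(J) - |\iso(J)|. \]
The key observation is the bound $\kappa_1(J) \ge |\iso(J)|$, which holds because every isolated vertex of $J$ forms its own connected component in the subgraph induced by $J$. Combining this with $\kappa_2(J) \ge 1$ (since $J$ is non-empty) yields the desired inequality $|N'(J)| \ge 1 + (d-1)|J|$.

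I do not anticipate a real obstacle: the statement is strictly weaker than \cref{prop:iso-exact}, and the only care needed is bookkeeping to subtract off the self-contributions $v \in N(v) \setminus N'(v)$ for each $v \in J$. An alternative would be to mimic the inductive proof of \cref{prop:iso-exact} directly, removing a vertex of $J$ of maximum depth and tracking how $|N'(J)|$ changes at each step, but the reduction above is shorter and keeps the isoperimetric content concentrated in a single place.
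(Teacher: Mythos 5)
Your proof is correct, and it follows a genuinely different route from the paper's, although both start from \cref{prop:iso-exact}. The paper splits $J$ by parity into $J\cap V_0$ and $J\cap V_1$, observes that for an independent set $\kappa_1(J)=|J|$ and $N'(J)=N(J)\setminus J$, applies \cref{prop:iso-exact} to each part, and then adds the two bounds using the fact that $N'(J\cap V_0)$ and $N'(J\cap V_1)$ are disjoint because they lie in opposite parity classes. Your argument skips the bipartite decomposition entirely: you identify $N'(J)=N(J)\setminus\iso(J)$ (equivalently, $|N'(J)|=|N(J)|-|\iso(J)|$ via inclusion--exclusion and the observation that $J\cap N'(J)=\conn(J)$), and then discharge the $-|\iso(J)|$ term against the $\kappa_1(J)$ term in \cref{prop:iso-exact} using the elementary bound $\kappa_1(J)\ge|\iso(J)|$. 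This is cleaner in that it treats $J$ at once and exposes exactly where the $+1$ comes from ($\kappa_2(J)\ge 1$). The paper's version is essentially your argument specialized to the case $\iso(J)=J$ (where $\kappa_1=|\iso|=|J|$ holds with equality), with the general case handled by the parity split instead of the inequality $\kappa_1\ge|\iso|$. Either approach is fine; yours is arguably a bit more direct.
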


\begin{proof}
First assume that $J$ is contained in either $V_0$ or $V_1=V \setminus V_0$.
In this case, $\kappa_1(J)=|J|$ so that Proposition~\ref{prop:iso-exact} implies that
\[ |N'(J)| = |N(J)| - |J| = (d-1)|J| + \kappa_2(J) \ge (d-1)|J| + 1 .\]
Second, for arbitrary $J$, we have $N'(v) \cap N'(w) = \emptyset$ if $|v| \neq |w| \mod 2$.
The result follows by applying the above to $J \cap V_0$ and $J \cap V_1$ separately. 
\end{proof}

For $J \subset V$ and $i \in [d]$, define
\[ K'_i(J) = \{ v \in V : |N'(v) \cap J| \ge i \} .\]
Fix $J$ and let $B$ be a half-quasi-ball of the same size.
Let $k_i=|K'_i(J)|$ and $m_i=|K'_i(B)|$.

\begin{prop}\label{prop:q*2}
For any distribution $q$ on $V$, 
\[ \sum_{i \in [d]} q^*(k_i) \le \sum_{i \in [d]} q^*(m_i) .\]
\end{prop}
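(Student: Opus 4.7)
The plan is to mimic the proof of \cref{prop:q*} step for step. The conclusion will reduce to proving the majorization
\[ (k_1,\ldots,k_d) \succ (m_1,\ldots,m_d) \]
between partitions sharing the total $d|J|$ (equality holds by the double counting $\sum_i |K'_i(J)| = \sum_v |N'(v) \cap J| = \sum_{u \in J}|N'(u)| = d|J|$, and likewise for $B$). Once this is established, I would extend $q^*$ to an increasing concave function on $\R_+$ by piecewise linear interpolation -- writing $q^*(j) = \sum_{i \le j} D(i)$ for a decreasing $D$ -- and apply Karamata's inequality (\cref{thm:conc}) to get $\sum_i q^*(k_i) \le \sum_i q^*(m_i)$.

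For the majorization, I would mirror the inductive argument in the proof of \cref{P:ki-mi-inequality}. The base case $r = 1$ combines the isoperimetric bound $k_1 = |N'(J)| \ge (d-1)|J| + 1$ from \cref{prop:isoSimple} with the identification $m_1 = (d-1)|B| + 1$ for every half-quasi-ball $B$. To see the latter, write $B = B'_n \cup B''$ with $B'' \subseteq \partial B_{n+2}$ of the same parity as $n$; then $N'(B'_n) = B'_{n+1}$ (the opposite-parity half-ball, which satisfies $|B'_{n+1}| = (d-1)|B'_n|+1$ by induction and \cref{prop:isoSimple}), and attaching $B''$ contributes exactly $(d-1)|B''|$ fresh depth-$(n+3)$ children. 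For the inductive step $r \ge 2$, the monotonicity $k_r \ge k_{r+1} \ge \ldots \ge k_d$ forces $k_r \ge (d|J| - \sum_{i<r} k_i)/(d-r+1)$, so combined with the induction hypothesis,
\[ \sum_{i=1}^r k_i \ge b_r + \sum_{i<r} m_i, \qquad b_r := \frac{d|J| - \sum_{i<r} m_i}{d-r+1}. \]
Writing $|B''| = a(d-1) + c$ with $0 \le c < d-1$ and identifying the $a$ ``full parents'' (with $d-1$ children in $B''$) and the one ``partial parent'' (with $c$ children in $B''$) at depth $n+1$, one reads off
\[ m_i = |B'_{n-1}| + a + \mathbf{1}_{\{i \le c+1\}} \qquad\text{for } 2 \le i \le d. \]
A short calculation then yields $\lceil b_r \rceil = m_r$, and integrality of $\sum_{i \le r} k_i$ promotes the bound to $\sum_{i \le r} k_i \ge \sum_{i \le r} m_i$.

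The main obstacle is the structural bookkeeping for the $m_i$'s: the bipartite nature of $\T_d$ forces the partial layer $B''$ to live at depth $n+2$ rather than $n+1$, so one must carefully identify the full and partial parents at depth $n+1$ in order to compute $m_r$ and verify $\lceil b_r \rceil = m_r$. This parallels the concluding case analysis in the proof of \cref{P:ki-mi-inequality} but with bipartite-adjusted constants.
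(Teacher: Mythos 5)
Your proposal is correct and follows the same route as the paper: reduce to the majorization $(k_1,\dots,k_d)\succ(m_1,\dots,m_d)$, identify the $m_i$'s for a half-quasi-ball explicitly, get $r=1$ from \cref{prop:isoSimple}, and push through $r\ge 2$ via the $\lceil b_r\rceil=m_r$ induction as in \cref{P:ki-mi-inequality}. The paper leaves the $r\ge2$ step to the reader (``one proceeds by induction in a similar manner''), and you fill it in; for what it is worth, your formula $m_i=|B'_{n-1}|+a+\mathbf 1_{\{i\le c+1\}}$ for $2\le i\le d$ is the correct one (it is what makes $\sum_i m_i=d|B|$ hold and what $K'_i$ of a half-quasi-ball actually equals), whereas the published text writes $|B'_{n-2}|$ there, which appears to be a typo.
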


\begin{proof}
As before, the proposition follows from \cref{thm:conc} once we show that 
$(k_1,\ldots,k_{d}) \succ (m_1,\ldots,m_d)$.
The fact that these are partitions is obvious, and they have the same size since
$d|J| = \sum_{i \in [d]} k_i = \sum_{i \in [d]} m_i$.
It remains to establish~\eqref{eq:dominance} for these partitions. 
Write
\[ |J| = |B| = |B'_n| + a(d-1) + c ,\]
where $B'_n \subset B \subsetneq B'_{n+2}$,
and $a,c \geq 0$ are integers so that $c  \leq d-2$.
The values of the $m_i$'s are now as follows: $m_1 = (d-1)|J| + 1$,
for $2 \leq i \leq c+1$, we have
$m_i = |B'_{n-2}| + a + 1$,
and for $c+2 \leq i \leq d$, we have
$m_i = |B'_{n-2}| + a$.
The inequality $\sum_{i \in [r]} k_i \ge \sum_{i \in [r]}m_i$ for $r=1$ follows from \cref{prop:isoSimple}.
For $r \ge 2$, one proceeds by induction in a similar manner as in the proof of \cref{P:ki-mi-inequality}.
\end{proof}

\begin{obs}\label{obs:half-quasi-ball}
  Let $B$ be a half-quasi-ball, and $i \in [d]$.
  Then, $K'_i(B)$ is a half-quasi-ball of opposite parity than $B$.
\end{obs}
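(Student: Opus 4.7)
My plan is to compute $K'_i(B)$ explicitly and recognize it as an initial segment of the parity-$(1-\epsilon)$ part of the ordering on $V$. The parity claim is immediate: since $B \subset V_\epsilon$ for some $\epsilon \in \{0,1\}$ and neighbors of vertices in $V_\epsilon$ lie in $V_{1-\epsilon}$, every vertex in $K'_i(B)$ lies in $V_{1-\epsilon}$.

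To establish the half-quasi-ball claim, I would first parametrize $B$ canonically. Let $n$ of parity $\epsilon$ be maximal with $B'_n \subseteq B$ (using the convention $B'_{-1}:=\emptyset$), and write $|B| - |B'_n| = a(d-1) + c$ with $a \geq 0$ and $0 \leq c \leq d-2$. The child-respecting property of the ordering then implies that the depth-$(n+2)$ vertices of $B$ group as $a$ complete sibling-sets (those of the first $a$ depth-$(n+1)$ vertices) plus a $c$-prefix of the next sibling-set (children of the $(a+1)$-st depth-$(n+1)$ vertex).

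The core step is a case analysis of $w \in V_{1-\epsilon}$ by depth. If $|w| \leq n-1$, then all $d$ neighbors of $w$ lie in $B'_n \subseteq B$, so $|N'(w) \cap B| = d$. If $|w| = n+1$, then the parent of $w$ lies in $B'_n$, and the number of children of $w$ in $B$ equals $d-1$, $c$, or $0$ according to whether the position $j$ of $w$ in the depth-$(n+1)$ ordering satisfies $j \leq a$, $j = a+1$, or $j > a+1$. If $|w| = n+3$, then only the parent of $w$ can lie in $B$, and this happens iff that parent is among the first $a(d-1)+c$ depth-$(n+2)$ vertices. If $|w| \geq n+5$, then no neighbor of $w$ lies in $B$.

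Assembling these cases, I would conclude: $K'_1(B) = B'_{n+1}$ together with the first $(a(d-1)+c)(d-1)$ depth-$(n+3)$ vertices, while for $2 \leq i \leq d$, $K'_i(B) = B'_{n-1}$ together with the first $a+1$ (resp.\ $a$) depth-$(n+1)$ vertices when $i \leq c+1$ (resp.\ $i \geq c+2$). The main technical point is then to check that these per-depth initial segments paste together into an initial segment of $V_{1-\epsilon}$ in the global ordering; this reduces to the observation that an initial segment at depth $k$ induces an initial segment at depth $k+1$, which is exactly the ordering's defining property. The only subtlety is the edge case $n = -1$, where $v_0$ has $d$ rather than $d-1$ children and no parent; this is handled by a direct count using that $B$ must then consist of the first $|B|$ depth-$1$ vertices.
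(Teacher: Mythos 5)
The paper states this observation without proof (in contrast to the lazy-walk analogue, \cref{obs:quasi-ball}, for which a short proof is given). Your argument fills that gap, and it follows the same strategy as the paper's proof of \cref{obs:quasi-ball}: parametrize the half-quasi-ball by $n$, $a$, $c$, do a case analysis by depth to compute $|N'(w)\cap B|$ for each $w$, and read off $K'_i(B)$ as a union of half-balls and depth-wise initial segments. Your computations are correct, including the identification of the assembled set as an initial segment of $V_{1-\epsilon}$ via the child-ordering property, and your isolation of the $n=-1$ boundary case (where the root's degree is $d$ rather than $d-1$) is exactly the right caveat.

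One point worth flagging: your formulas give $|K'_i(B)| = |B'_{n-1}| + a + 1$ for $2\le i\le c+1$ and $|K'_i(B)| = |B'_{n-1}| + a$ for $c+2\le i\le d$, whereas the paper's proof of \cref{prop:q*2} writes these with $|B'_{n-2}|$ in place of $|B'_{n-1}|$. Your version is the correct one: it satisfies the required identity $\sum_{i\in[d]} m_i = d|J|$ (using $|B'_{n}|=(d-1)|B'_{n-1}|+1$), whereas $|B'_{n-2}|$ does not, and $B'_{n-2}$ even has the wrong parity to be contained in $K'_i(B)$. So the paper's $B'_{n-2}$ appears to be a typo for $B'_{n-1}$, and your derivation implicitly corrects it.
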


\begin{proof}[Proof of \cref{thm:moreCenterWithT-nonlazy}]
Let $J \subset V$ and let $B$ be the half-quasi-ball of the same size as $J$
and with opposite parity than $p$.
Let $k_i=|K'_i(J)|$ and $m_i=|K'_i(B)|$.
We have
\begin{align}
q'(J) &= \frac{1}d \sum_{i \in [d]} q(K'_i(J)) \nonumber \\
& \leq \frac{1}d \sum_{i \in [d]} q^*(k_i) \label{eq:t1} \\
& \leq \frac{1}d \sum_{i \in [d]} q^*(m_i) \label{eq:t2} \\
& \leq \frac{1}d \sum_{i \in [d]} p^*(m_i) \label{eq:t3} \\
& = \frac{1}d \sum_{i \in [d]} p(K'_i(B)) \label{eq:t4} \\
& = p'(B), \nonumber
\end{align}
where the first and last equalities follow from the definition of the non-lazy random walk; \eqref{eq:t1} follows from the definition of $q^*$; \eqref{eq:t2} follows from \cref{prop:q*2}; \eqref{eq:t3} holds because $p$ majorizes $q$; and~\eqref{eq:t4} follows from \cref{obs:half-quasi-ball} and the assumption that $p$ is half-greedily arranged. The result follows in the same way as in the proof of \cref{thm:moreCenterWithT2}. 
\end{proof}

\begin{proof}[Proof of \cref{thm:simple}]
  Denote by $p_t$ the distribution of $S_t$, and denote by $q_t$ the distribution of $Z_t$.
  Since $d>2$, we have $|B_n| \leq 1 + d (d-1)^{n} \leq |B'_{n+1}|$.
  We then have
  \begin{align}
    q_t(B_n)
    & \leq q^*_t(|B_n|) \label{eq11} \\
    & \leq p^*_t(|B_n|) \label{eq12}\\
    & \leq p^*_t(|B'_{n+1}|) \label{eq13}\\
    & \leq p_t(B_{n+2}) , \label{eq15}
  \end{align}
  where~\eqref{eq11} holds by definition of $q_t^*$;
  \eqref{eq12}~holds by \cref{thm:moreCenterWithT-nonlazy} 
  and induction on $t$;
  \eqref{eq13}~holds because $|B_n| \leq |B'_{n+1}|$;
  and \eqref{eq15}~holds because $p_t$ is half-greedily arranged,
  and because $B'_{n+1} \cup B'_{n+2} \subseteq B_{n+2}$.
  The rest of the proof proceeds in a similar manner as in the proof of \cref{thm:lazy}.
\end{proof}

\section{Exceptional times}

In this section we consider the possible slow-down of a random walk on $\Z \cong \T_2$ and on $\T_d$ for $d>2$.
While the domination of \cref{thm:lazy} still applies, we ask here whether $(\pi_t)$ may be chosen so that there are exceptional times where $|Y_t|$ is much smaller than $|X_t|$.
We prove \cref{thm:lazy-bad} on the {\em existence} of exceptional times of slowing down on~$\Z$.
In contrast, we prove \cref{thm:lazy-bad2} on the {\em non-existence} of such times on $T_d$ when $d>2$ and the permutations are restricted to automorphisms.
This section is mostly independent of the previous parts of the paper.

\subsection{Exceptional times for $\Z$}
  
\begin{proof}[Proof of Theorem~\ref{thm:lazy-bad}]
  The permutations $(\pi_t)$ are all translations of $\Z$.
  Consequently, the permutations commute not just with each other but with the steps of the random walk.
  We shall define an integer sequence $\ell_t$, and define the permutations $\pi_t$ by $\pi_t \pi_{t-1} \cdots \pi_1(v)=v-\ell_t$.
  Thus the process $(Y_t+\ell_t)$ has the same law as the random walk $(X_t)$.
However, the coupling between the processes may not be such that $Y_t = X_t-\ell_t$, even though that is one possible coupling.

  To define $(\ell_t)$, let $\phi(t)$ denote the integer part of $(\frac43 t \log\log t)^{1/2}$.
  Let $f(t)$ be a positive integer-valued non-decreasing function growing to infinity slower than $\phi(t)$.
  Let $(b_j)_{j=0}^\infty$ be defined by $b_0 = 1$ and $b_{j+1} = b_j + f(b_j)$ for all $j\geq 0$.
  Let $(\ell_t)$ be defined by
  $\ell_{b_j+i}$ is the integer part of $\phi(b_j) \cdot (\frac{4i}{f(b_j)}-2)$ for all $j \ge 0$ and $0 \le i < f(b_j)$.
  Intuitively, for each $j$, the numbers of the form $\ell_{b_j+i}$
  are uniformly and densely placed in the interval between $-2 \phi(b_j)$ and $2 \phi(b_j)$.

  Fix $\eps>0$ and consider the set $T_\eps$ of times $t$ at which $X_t \ge (1-\eps) \phi(t)$. By the law of the iterated logarithm for the lazy random walk $(X_t)$, we have that $T_\eps$ is almost surely infinite.
  By the same law, almost surely, the set $T'$ of times $t$ at which $|Y_t+\ell_t| \le 1.5\phi(t)$ contains all but finitely many positive integers.

  Fix $t \in T_\eps \cap T'$ sufficiently large.
  Let $j \ge 0$ be such that $b_j \le t < b_{j+1}$.
  Since 
  $|Y_t+\ell_t| \le 1.5\phi(t) < 2\phi(b_j)$, there exists $0 \le i < f(b_j)$ such that 
  \[|(Y_t+\ell_t) - \ell_{b_j+i}| \le 
  \eps \phi(t).\] 
  At time $t' = b_j+i$, we have
  \[ X_{t'} \ge X_t - |t-t'| \ge (1-\eps)\phi(t) - f(b_j) \ge (1-\eps)\phi(t) - f(t) > 0 ,\]
  and
  \[ |Y_{t'}|=|(Y_{t'}+\ell_{t'})-\ell_{t'}| \le |(Y_t+\ell_t)-\ell_{t'}| + |t-t'| \le \eps \phi(t) + f(t) .\]
  Thus,
  \[ |X_{t'}|-|Y_{t'}| \ge (1-2\eps)\phi(t) - 2f(t) \ge (1-3\eps)\phi(t') .\]
  We conclude that almost surely,
  \[ \limsup_{t \to \infty} \frac{|X_t|-|Y_t|}{\phi(t)} \ge 1. \]
  Since $\limsup_{t \to \infty} \frac{|X_t|}{\phi(t)} = 1$ almost surely, we have equality above.
\end{proof}

\subsection{No exceptional times for $d>2$}

We split the proof of \cref{thm:lazy-bad2} into two parts for readability, and in order to emphasize the missing piece for lifting the automorphism restriction.

\begin{lemma}
  \label{thm:lazy-bad22}
  For every $d>2$ and every sequence of automorphisms $(\pi_t)$ of~$T_d$, there exists a coupling of the lazy random walk process $(X_t)$ and the permuted random walk process $(Y_t)$ such that, almost surely,
  \begin{equation*}
    |Y_t| \geq |X_t| - 2\log t \qquad\text{for all $t$ large enough}.
\end{equation*}
\end{lemma}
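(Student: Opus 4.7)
The plan is to exploit the automorphism structure of the $\pi_t$ to reduce the problem to a statement about a single lazy random walk, after which the identity coupling combined with Borel--Cantelli will suffice. Set $\alpha_t := \pi_t \pi_{t-1} \cdots \pi_1$, which is still an automorphism of $\T_d$, and let $u_t := \alpha_t^{-1}(v_0)$. Since automorphisms preserve neighborhoods, $N(\alpha_t(v)) = \alpha_t(N(v))$, and a short induction shows that $Z_t := \alpha_t^{-1}(Y_t)$ has the law of a lazy random walk from $v_0$. Coupling by simply setting $Z_t = X_t$ for all $t$ then gives $Y_t = \alpha_t(X_t)$, and since $\alpha_t$ is a tree isometry,
\[
  |Y_t| = d(\alpha_t(X_t), v_0) = d(X_t, u_t).
\]

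Using the tree identity $d(v,u) = |v| + |u| - 2 k(v, u)$, where $k(v, u)$ is the depth of the least common ancestor of $v$ and $u$ in the tree rooted at $v_0$, this becomes
\[
  |Y_t| - |X_t| = |u_t| - 2 k_t, \qquad k_t := k(X_t, u_t) \in [0, |u_t|],
\]
so the desired bound $|Y_t| \ge |X_t| - 2\log t$ is equivalent to $k_t \le (|u_t|+ 2\log t)/2$. If $|u_t| \le 2\log t$, the trivial estimate $k_t \le |u_t|$ already yields the inequality deterministically, so it remains to handle the case $|u_t| > 2\log t$.

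In the remaining case, $k_t > (|u_t|+2\log t)/2$ forces $X_t$ to lie in the subtree $S_m$ hanging off the $v_0$-to-$u_t$ geodesic at its depth-$m$ vertex $w_m$, where $m := \lceil (|u_t|+ 2\log t)/2 \rceil$. Since $w_m$ is the unique entrance to $S_m$ from $v_0$, the event $\{X_t \in S_m\}$ is contained in $\{X \text{ visits } w_m\}$, and the standard hitting-probability computation on $\T_d$ (whose solution is the same for the lazy and non-lazy walks) gives $\Pr[X \text{ visits } w_m] = (d-1)^{-m}$. Consequently
\[
  \Pr\big[\, k_t > (|u_t|+ 2\log t)/2 \,\big] \le (d-1)^{-m} \le (d-1)^{-(|u_t|+2\log t)/2} \le t^{-2\log(d-1)}.
\]
For $d \ge 3$ the exponent satisfies $2\log(d-1) \ge 2\log 2 > 1$, so these probabilities are summable in $t$ and Borel--Cantelli ensures the bad event occurs only finitely often almost surely.

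The main delicate point is realising that the identity coupling $Z_t = X_t$ already suffices: the entire adversarial power of $(\pi_t)$ is packaged into the deterministic sequence $(u_t)$, and a lazy random walk on $\T_d$ with $d \ge 3$ reaches any given depth-$m$ vertex along a specific ray with probability at most $(d-1)^{-m}$, uniformly in $t$. The hypothesis $d > 2$ enters the argument precisely through the summability threshold $2\log(d-1) > 1$, which is exactly where the reasoning necessarily breaks down at $d = 2$, in agreement with \cref{thm:lazy-bad}.
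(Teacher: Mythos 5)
Your proof is correct and uses the same automorphism-based coupling $Y_t = \pi_t\cdots\pi_1 X_t$ as the paper, with the same key containment: the bad event $\{|Y_t| < |X_t| - 2\log t\}$ forces $X_t$ to lie in the subtree hanging off the geodesic from $v_0$ to $u_t = (\pi_t\cdots\pi_1)^{-1}v_0$ at depth $m \approx \tfrac12(|u_t|+2\log t)$. The only real difference is the final probability estimate: the paper uses that $X_t$ is uniform on $\partial B_{|X_t|}$ given its depth, together with a separate concentration bound to control $\Pr(|X_t| < 2\log t)$, whereas you bound the containing event directly by the hitting probability $(d-1)^{-m}$ of the cut vertex $w_m$ and dispose of the case $|u_t| \le 2\log t$ deterministically via $k_t \le |u_t|$. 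Your variant thus bypasses the concentration estimate on $|X_t|$ altogether; both estimates are of the same order (the paper's $1/|\partial B_m| = \tfrac{1}{d}(d-1)^{1-m}$ versus your $(d-1)^{-m}$), and both give a $t$-summable bound precisely because $2\log(d-1) > 1$ for $d \ge 3$.
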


\begin{proof}
  Using that $(\pi_t)$ consists only of automorphisms, it is not hard to check that $(\pi_t\pi_{t-1}\cdots\pi_1 X_t)$ has the same distribution as the permuted random walk process $(Y_t)$.
  Thus, setting $Y_t=\pi_t\pi_{t-1}\cdots\pi_1 X_t$ describes a coupling between $(X_t)$ and $(Y_t)$.

  To see that this coupling satisfies the claimed property, note that $|X_t|-|Y_t|>k$ implies that either $|X_t|<k$ or $(\pi_t\pi_{t-1}\cdots\pi_1)^{-1} v_0 \in T_k(X_t)$, where $T_k(x)$, defined when $|x| \ge k$, is the connected component (subtree) of $\{ v \in V(\T) : |v| \ge k \}$ containing $x$.
  Since $X_t$ is uniform given its depth $|X_t|$, we see that
  \[ \Pr(|X_t|-|Y_t|>k) \le \Pr(|X_t|<k) + \frac1{|\partial B_k|} ,\]
  where $\partial B_k = \{ v \in V(\T) : |v|=k \}$ and $|\partial B_k|=d(d-1)^{k-1}$.
  Standard concentration bounds on the speed of $(X_t)$ now imply that $\sum_{t=1}^\infty \Pr(|X_t|-|Y_t|> 2\log t) < \infty$, and the Borel--Cantelli lemma completes the proof.
\end{proof}

\begin{lemma}\label{lem:SRW-coupling}
  Let $(X_t)$ be a non-trivial nearest-neighbor random walk on $\Z$ (possibly biased and with any laziness).
  There is a coupling of $(X_t)$ with another copy of itself $(X'_t)$ such that for some constant $C>0$, almost surely,
  \[ X'_t-X_t \ge \frac{\sqrt t}{(\log t)^C} \qquad\text{for all $t$ large enough}.\]
\end{lemma}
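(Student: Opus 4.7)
The plan is to couple $X$ and $X'$ blockwise at dyadic scales via a shifted-quantile coupling of block sums. Fix a large constant $C>1$, set $T_k = 2^k$ and $L_k = 2^k$, and write $B_k = [T_k, T_{k+1})$. For each block let $U_k$ be an independent $\mathrm{Unif}[0,1]$ variable, put $p_k = k^{-C}$, and let $F_k$ denote the CDF of a sum of $L_k$ i.i.d.\ copies of the walk step. Define the block sums
\[ S_k := F_k^{-1}(U_k), \qquad S'_k := F_k^{-1}\bigl((U_k + p_k) \bmod 1\bigr); \]
conditional on $(S_k,S'_k)$, sample the step sequences of $X$ and $X'$ over $B_k$ independently and uniformly among sequences with the respective prescribed sums. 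Since $U_k + p_k \bmod 1$ is again uniform, both block sums have CDF $F_k$; averaging the uniform-given-sum distribution over the block sum recovers the i.i.d.\ step law, so $(X_t)$ and $(X'_t)$ are marginally each copies of the given walk.

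Next I would analyse the checkpoint gap $G_k := X'_{T_k} - X_{T_k}$. On the ``good'' event $\{U_k \le 1 - p_k\}$ (probability $1 - p_k$), monotonicity of $F_k^{-1}$ combined with Gaussian regularity of the Binomial-like quantile yields $S'_k - S_k \gtrsim p_k \sqrt{L_k} \asymp \sqrt{L_k}/k^C$. On the complementary ``bad'' event the shift wraps around and the decrement is at most $O(\sqrt{L_k \log(1/p_k)})$. Because $\sum_k p_k < \infty$, the Borel--Cantelli lemma gives a random index $K$ past which every block is good, and then
\[ G_k \;\ge\; G_K + \sum_{j=K}^{k-1} \frac{\sqrt{L_j}}{j^C} \;\gtrsim\; \frac{\sqrt{T_k}}{(\log T_k)^C} \qquad\text{for all large } k. \]

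It remains to upgrade the checkpoint control to a uniform-in-$t$ control. Inside $B_k$, the gap is the difference of two conditionally independent random-walk bridges with prescribed endpoints, so a Doob-type maximal inequality bounds its deviation from the linear interpolation by $O(\sqrt{L_k \log L_k})$ except on events summable in $k$; a further Borel--Cantelli pass eliminates those. The main obstacle I expect is that this bridge fluctuation $\sqrt{L_k}$ is much larger than the per-block target gain $\sqrt{L_k}/k^C$, so the checkpoint bound alone does not force the gap to stay above $\sqrt{t}/(\log t)^C$ at every intra-block time. To close this gap I would iterate the construction on sub-blocks of $B_k$ at successively finer scales, so that at any time $t$ the relevant fluctuation comes from a block of length $L_k/(\log L_k)^{C'}$ with fluctuation $\sqrt{L_k}/(\log L_k)^{C'/2}$; summability of failure probabilities is preserved at each scale, and the nested logarithmic overheads are absorbed by enlarging the final constant.
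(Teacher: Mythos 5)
Your block structure and checkpoint analysis are in the right spirit and closely track the paper's proof: both use dyadic times $T_k=2^k$, couple the block increments so that the upper walk gains roughly $\sqrt{L_k}/(\log L_k)^C$ with high probability, and invoke Borel--Cantelli to kill the bad blocks. The main difference in the checkpoint coupling is cosmetic (you use a quantile shift of the sum; the paper shifts binomial mass near the mode so that the gain occurs with constant positive probability and a loss occurs with probability $O(1/\log^2 n)$), and either works.

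The genuine gap is exactly where you flag uncertainty: the within-block coupling. After conditioning on the block sums, you sample the two step sequences \emph{independently} and propose to control the resulting bridge fluctuation by passing to sub-blocks. This does not work, and the reason is not merely technical. Suppose $B_k$ is split into $M_k$ sub-blocks of length $\ell_k = L_k/M_k$. Given the block sums, the sub-block sums of $X$ form a bridge of $M_k$ steps, each of typical size $\sqrt{\ell_k}$, and the running sum of that bridge fluctuates by order $\sqrt{M_k}\cdot\sqrt{\ell_k} = \sqrt{L_k}$ --- the fluctuation is a macroscopic bridge effect and does not shrink with $\ell_k$. Since $X$ and $X'$ are sampled independently inside the block, $X'_t - X_t$ at the sub-block endpoints already fluctuates by $\Theta(\sqrt{L_k})$, which swamps both the per-block gain $\sqrt{L_k}/k^C$ and the accumulated checkpoint gap $\sqrt{T_k}/(\log T_k)^C$. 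Passing to finer sub-blocks does not help for the same reason at each scale; the $\sqrt{L_k}$ bridge fluctuation is incurred at the coarsest level and is inherited downward.

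The missing idea is to replace the conditionally independent bridges by a \emph{pathwise monotone} bridge coupling. Given $S_{2^n}' \ge S_{2^n}$, sample the path $(S_i)$ first and then obtain $(S_i')$ by flipping a uniformly random set of $\tfrac12(S'_{2^n}-S_{2^n})$ of the $-1$ increments of $S$ to $+1$. This preserves the marginal law of $(S_i')$ given its endpoint and guarantees $S'_i \ge S_i$ for every $i$ in the block. Consequently $X'_t - X_t$ is nondecreasing in $t$ on every block where the endpoint coupling succeeds, so once the bad blocks stop occurring (Borel--Cantelli), the intra-block gap never dips below the previous checkpoint value, and the intra-block fluctuation problem disappears entirely. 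With this one change, your checkpoint argument then does yield the claim; without it, the proof has a hole that the sub-block iteration cannot close.
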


\begin{remark}
For positively biased random walks, $X_t$ and $X'_t$ are eventually positive so that the conclusion is equivalent to $|X'_t|-|X_t| \ge \sqrt t /(\log t)^C$. By interchanging the roles of $(X_t)$ and $(X'_t)$, the same statement is seen to hold also for negatively biased random walks. For unbiased random walks, on the other hand, it holds that $X'_t=0$ infinitely often.
\end{remark}

\begin{remark}
The term $\sqrt t/(\log t)^C$ is not optimal, but it cannot be improved to $c\sqrt t$. Indeed, in any coupling, the probability of the event $\{X_t \ge 0, X'_t < c\sqrt t\}$ is bounded from below, so that Fatou's lemma gives that $X'_t-X_t<c\sqrt t$ infinitely often with positive probability.
\end{remark}

\begin{proof}
We may always couple $(X_t)$ and $(X'_t)$ so that they stay put at the same times (and this set of times has density less than 1). It therefore suffices to handle the non-lazy case. We thus assume that $\Pr(X_1-X_0=1)=p$ and $\Pr(X_1-X_0=-1)=1-p$ for some $p \in (0,1)$.

The main step is to construct a coupling between two Binomial$(n,p)$ random variables $B_n$ and $B'_n$ such that
\[ \Pr\big(B'_n - B_n \ge \tfrac{\sqrt n}{\log^2 n} \big) \ge c' \qquad\text{and}\qquad \Pr(B'_n < B_n) \le \tfrac{C'}{\log^2 n} ,\]
where $c',C'>0$ are constants that depend on $p$ but not on $n$. Let $m$ be the integer part of $\sqrt n / \log^2 n$ and consider the two intervals
\[ I_n=[pn-\sqrt n,pn] \cap \N \qquad\text{and}\qquad J_n=[pn- \sqrt n,pn-m] \cap \N .\]
Denote $f(i)=\Pr(B_n=i)$ and observe that $\frac{f(i)}{f(i-1)} = \frac{p}{1-p} \cdot \frac{n-i+1}{i} \ge 1$ whenever $i \le p(n+1)$. Thus, $f(i)$ is increasing for $i \in I_n$, and $f(i) \le f(i+m)$ for $i \in J_n$. It follows that there is a coupling such that
\begin{align*}
 B_n \notin I_n &\implies B'_n=B_n,\\
 B_n \in J_n &\implies B'_n=B_n+m,\\
 B_n \in I_n \setminus J_n &\implies B'_n \in I_n .
\end{align*}
The central limit theorem implies that $\Pr(B_n \in J_n)$ converges as $n\to\infty$ to some positive constant $c=c(p)$. Since $f$ is bounded from above by $C/\sqrt n$ for some constant $C=C(p)$, we have that $\Pr(B_n \in I_n \setminus J_n) \le Cm/\sqrt n \le C/\log^2 n$.
This completes the construction of a coupling between $B_n$ and $B'_n$ with the claimed properties.

The above coupling between $B_n$ and $B'_n$ is relevant because $X_t$ has the same law as $2B_t-t$. Consider the times $t_n=2^n$ for $n \ge 1$. We construct the coupling between $(X_t)$ and $(X'_t)$ so that it is Markovian at these times. Fix $n \ge 1$ and suppose we have already coupled $(X_t)_{t \le t_n}$ and $(X'_t)_{t \le t_n}$ in some manner (the coupling for $n=1$ can be done arbitrarily). We now describe the (conditional) coupling between the processes in the time range $(t_n,t_{n+1}]$. This coupling only depends on $X_{t_n}$ and $X'_{t_n}$.
The law of $(X_t-X_{t_n})_{t_n \le t \le t_{n+1}}$ and $(X'_t-X'_{t_n})_{t_n \le t \le t_{n+1}}$ is entirely independent of the past (conditioned on time $t_n$). 
These are two random walks of length $t_{n+1}-t_n=2^n$, which we denote by $(S_i)_{i=0}^{2^n}$ and $(S'_i)_{i=0}^{2^n}$. To couple these walks, we first couple the endpoints $S_{2^n}$ and $S'_{2^n}$ using the above coupling between $B_{2^n}$ and $B'_{2^n}$ (pushed forward by the map $x \mapsto 2x-2^n$). Given the endpoints, we couple the walks so that $S'_i \ge S_i$ for all $i$ when $S'_{2^n} \ge S_{2^n}$, and arbitrarily otherwise. The former can be done by first sampling $(S_i)$ and then uniformly choosing $\frac12(S'_{2^n}-S_{2^n})$ coordinates $i$ among those where the increment $S_i-S_{i+1}$ is $-1$ and setting the corresponding increments $S'_i-S'_{i-1}$ to $+1$ there (with all other increments remaining the same for both). This completes the description of the coupling between $(X_t)$ and $(X'_t)$.

It remains to check that the constructed coupling has the claimed property. Let $\Delta_n = X_{t_{n+1}}-X_{t_n}$ and $\Delta'_n = X'_{t_{n+1}}-X'_{t_n}$. Define events
\[ E_n = \{ \Delta'_n - \Delta_n \ge 2^{n/2} / n^2 \} \qquad\text{and}\qquad F_n = \{ \Delta'_n<\Delta_n \} .\]
Since $F_n$ has probability at most $C'/n^2$, only finitely many of the $F_n$ occur almost surely. Let $N_1$ be the smallest positive integer such that $F_n$ does not occur for any $n \ge N_1$. Observe that $X'_t-X_t$ is non-decreasing for $t \ge t_{N_1}$.
Since $\{E_n\}_{n=1}^\infty$ are independent events, each of probability at least $c'$, infinitely many of them occur almost surely. Moreover, almost surely, for any $n$ large enough, at least one of $E_{n-1},E_{n-2},\dots,E_{n-C''\log n}$ occurs, where $C''>0$ is some large constant. Let $N_2$ be the smallest positive integer so that this holds for $n \ge N_2$. Observe that if $n-C''\log n \ge \max\{N_1,N_2\}$ and $t_n \le t \le t_{n+1}$, then letting $n-C''\log n \le m<n$ be such that $E_m$ occurs, we obtain that
\begin{align*}
 X'_t-X_t
  &\ge X'_{t_{m+1}} - X_{t_{m+1}} \\
  &= \Delta'_m-\Delta_m  + X'_{t_m} - X_{t_m} \\
  &\ge 2^{m/2}/m^2 + X'_{t_N}-X_{t_N} \\
  &\ge \sqrt t \cdot e^{-5C\log\log t} ,
\end{align*}
where the last inequality holds for $t$ large enough.
\end{proof}

\begin{proof}[Proof of \cref{thm:lazy-bad2}]
Let $(X'_t)$ denote a copy of the lazy random walk $(X_t)$.
By the first lemma, $(X'_t)$ and $(Y_t)$ can be coupled so that $|X'_t|-|Y_t| \le 2\log t$ eventually. By the second lemma (and the first remark following it), 
the walks $(|X_t|)$ and $(|X'_t|)$ can be coupled so that $|X'_t|-|X_t| \ge \sqrt t / \log^C t$ eventually. Extend this coupling to a coupling of $(X_t)$ and $(X'_t)$. The processes $(X_t)$ and $(Y_t)$ are now coupled so that $|Y_t|-|X_t| \ge \sqrt t / \log^C t -2\log t \ge \sqrt t / \log^{2C} t$ eventually.
\end{proof}

Removing the automorphism assumption in \cref{thm:lazy-bad22}, even at the expense of increasing the upper bound on $|X_t|-|Y_t|$ from $2\log t$ to $\sqrt t / (\log t)^C$ for a sufficiently large constant $C$, would suffice in order to lift the automorphism assumption in \cref{thm:lazy-bad2}.

\subsection*{Acknowledgements}

OA would like to thank the American Institute of Math, where this project was initiated, and the Technion, where the collaboration began. OA and YS are supported in part by NSERC. 
AY is partially supported by the BSF.

\bibliographystyle{abbrv}
\bibliography{permuted-RW-20}

\begin{thebibliography}{1}

\bibitem{ChaDi1}
S.~Chatterjee and P.~Diaconis.
\newblock Speeding up markov chains with deterministic jumps.
\newblock {\em Probability Theory and Related Fields}, 178(3-4), 2020.

\bibitem{ChaDi2}
S.~Chatterjee and P.~Diaconis.
\newblock Correction to: Speeding up markov chains with deterministic jumps.
\newblock {\em Probability Theory and Related Fields}, 181(1-3):377--400, 2021.

\bibitem{GaPe}
S.~Ganguly and Y.~Peres.
\newblock Permuted random walk exits typically in linear time.
\newblock In {\em 2014 Proceedings of the Eleventh Workshop on Analytic
  Algorithmics and Combinatorics (ANALCO)}, pages 74--81. SIAM, 2014.

\bibitem{Gouezel}
S.~Gou{\"e}zel.
\newblock Exponential bounds for random walks on hyperbolic spaces without
  moment conditions.
\newblock {\em Tunisian Journal of Mathematics}, 4(4):635--671, 2023.

\bibitem{lyons2017probability}
R.~Lyons and Y.~Peres.
\newblock {\em Probability on trees and networks}.
\newblock Cambridge University Press, 2017.

\bibitem{marshall1979inequalities}
A.~W. Marshall, I.~Olkin, and B.~C. Arnold.
\newblock {\em Inequalities: theory of majorization and its applications},
  volume 143.
\newblock Springer, 1979.

\bibitem{morris2005evolving}
B.~Morris and Y.~Peres.
\newblock Evolving sets, mixing and heat kernel bounds.
\newblock {\em Probability Theory and Related Fields}, 133(2):245--266, 2005.

\bibitem{PySo}
R.~Pymar and P.~Sousi.
\newblock A permuted random walk exits faster.
\newblock {\em arXiv preprint arXiv:1304.6704}, 2013.

\bibitem{virag2000anchored}
B.~Vir{\'a}g.
\newblock Anchored expansion and random walk.
\newblock {\em GAFA}, 10(6):1588--1605, 2000.

\end{thebibliography}

\end{document}